\documentclass{article}
\usepackage{amsfonts}
\usepackage{amsmath}
\usepackage{amsthm}
\usepackage[margin=1.0in]{geometry}
\usepackage{hyperref}
\usepackage{url}

\def\@begintheorem#1#2{\par\bgroup{\sc #1\ #2. }\it\ignorespaces}
\def\@opargbegintheorem#1#2#3{\par\bgroup{\sc #1\ #2\ (#3). } \it\ignorespaces}
\def\@endtheorem{\egroup}
\newtheoremstyle{slanttext}{8pt}{3pt}{\sl}{}{\bf}{.}{.5em}{}
\theoremstyle{slanttext}
\newtheorem{theorem}{Theorem}[section]
\newtheorem{corollary}[theorem]{Corollary}
\newtheorem{lemma}[theorem]{Lemma}

\begin{document}

\title{Tail diameter upper bounds for polytopes and polyhedra}
\author{J. Mackenzie Gallagher, {\sl George Mason University} \and Edward D. Kim, {\sl University of Wisconsin-La Crosse}}

\maketitle

\begin{abstract}
In 1992, Kalai and Kleitman proved a quasipolynomial upper bound on the diameters of convex polyhedra. Todd and Sukegawa-Kitahara proved tail-quasipolynomial bounds on the diameters of polyhedra. These tail bounds apply when the number of facets is greater than a certain function of the dimension. We prove tail-quasipolynomial bounds on the diameters of polytopes and normal simplicial complexes. We also prove tail-polynomial upper bounds on the diameters of polyhedra.
\end{abstract}

\section{Introduction}

Diameter upper bounds for convex polyhedra provide lower bounds to the number of simplex iterations needed for linear programming. Upper bounds for polytopes are generally similar, and are of interest in their own right due to Santos' counterexample (see~\cite{Santos:CounterexampleHirsch}) to the Hirsch Conjecture. Recall that a polyhedron is the intersection of a finite number of halfspaces and a polytope is the convex hull of a finite set of points, thus a polytope is a bounded polyhedron. For a given polytope or polyhedron $P$, we use $d$ to denote the dimension of its affine span and $n$ to denote the number of $(d-1)$-dimensional faces. The Hirsch Conjecture asserted that the diameter of a polytope was at most $n-d$. The conjecture is false (see~\cite{MatschkeSantosWeibel:Width5Prismatoids}) for $\frac{n}{2}=d \geq 20$. History on the Hirsch Conjecture is given in~\cite{Klee:The-d-step-conjecture},~\cite{Klee:d-step}, or the survey~\cite{KimSantos:HirschSurvey}. Our terminology follows~\cite{Grunbaum:Polytopes} and~\cite{Ziegler:Lectures}.

The best known upper bounds on the diameters of convex polyhedra are the quasipolynomial bounds given by Kalai and Kleitman (see~\cite{Kalai:Quasi-polynomial}), with recent improvements by Todd (see~\cite{Todd:KalaiKleitman}) and Sukegawa and Kitahara (see~\cite{Sukegawa:KalaiKleitman}). For polytopes, there are several classical upper bounds on the diameters of polytopes which are linear in the number $n$ of facets of $P$ given by Barnette (see~\cite{Barnette} and~\cite{Barnette:UpperBound}) and Larman (see~\cite{Larman}).

Recent research studies the diameters of abstract polytopes and related objects, following Adler et al. in~\cite{Adler:AbstractPolytopesThesis}, \cite{Adler:LowerBounds}, \cite{AdlerDantzigMurty:AbstractPolytopes}, \cite{Adler:MaxDiamAbsPoly}, \cite{AdlerSaigal:LongPathsAbstract}, \cite{Kalai:DiameterHeight}, \cite{Lawrence}, and \cite{Murty:GraphAbstract}. Eisenbrand et al.~(see~\cite{Eisenbrand:Diameter-of-Polyhedra}) introduced connected layer families as a generalization of polyhedra. In~\cite{Eisenbrand:Diameter-of-Polyhedra}, Eisenbrand et al.{} proved that the quasipolynomial upper bound of Kalai and Kleitman in~\cite{Kalai:Quasi-polynomial} applied to connected layer families. In~\cite{Kim:PolyhedralGraphAbstractions}, Kim generalized connected layer families to arbitrary graphs, defining subset partition graphs satisfying dimension reduction. Very recently, combinatorial paths have been studied in normal simplicial complexes (see, e.g.,~\cite{Labbe:ExpCombSegments} and~\cite{Santos:RecentProgress}). Normal simplicial complexes are studied in~\cite{Bogart:SuperlinearSPG} and~\cite{Kim:PolyhedralGraphAbstractions}, which are subset partition graphs satisfying dimension reduction in the dual setting.

In Section~\ref{section:PreviousBounds}, we review previous diameter bounds. In Section~\ref{section:polyhedra}, we present new tail-quasipolynomial upper bounds for the diameters of polyhedra. In Section~\ref{section:polytopes}, we revisit Sukegawa and Kitahara's (see~\cite{Sukegawa:KalaiKleitman}) tail-quasipolynomial upper bound for the diameters of polytopes. In Section~\ref{section:complexes}, we apply the recent techniques of Todd and Sukegawa-Kitahara to obtain tail-quasipolynomial upper bounds for normal simplicial complexes. Section~\ref{section:iterated-recursion} applies iterated recursion to a classical inequality of Kalai and Kleitman (see~\cite{Kalai:Quasi-polynomial}) to obtain the first tail-polynomial bounds in Section~\ref{section:tail-polynomial}. Our main result is Theorem~\ref{theorem:almost-linear}, a tail-almost-linear upper bound on the diameter of convex polyhedra when the number of facets is large. As with~\cite{Sukegawa:KalaiKleitman}, our bounds are tail bounds in that the presented inequalities apply when the number $n$ of facets is greater than some function of the dimension $d$. We conclude with final remarks in Section~\ref{section:concluding-remarks}.

\section{Previous bounds}\label{section:PreviousBounds}

Let $\Delta_b(d,n)$ denote the maximum diameter among $d$-dimensional polytopes with $n$ facets for $n > d$. We use $\Delta_u(d,n)$ to denote the maximum diameter of $d$-dimensional polyhedra with $n$ facets when $n \geq d$. Trivially, $\Delta_b(d,n) \leq \Delta_u(d,n)$ when both quantities are defined.

In this article, we expand on the upper bound $ \Delta_u(d,n) \leq (n-d)^{\log_2(d-1)}$ if $n \geq d \geq 3$ by Sukegawa and Kitahara (see~\cite{Sukegawa:KalaiKleitman}). As with the result $\Delta_u(d,n) \leq (n-d)^{\log_2 d}$ if $n \geq d \geq 1$ by Todd in~\cite{Todd:KalaiKleitman}, their proof relies on the recurrence relation
\begin{equation}\label{equation:KalaiKleitman-Deltau}
\Delta_u(d,n) \leq \Delta_u(d-1,n-1)+2\Delta_u(d,\lfloor n/2\rfloor) + 2 \quad \text{if $n \geq 2d$}
\end{equation}
in~\cite{Kalai:Quasi-polynomial} by Kalai-Kleitman. Their proof also uses the bound $\Delta_b(d,n) \leq 2^{d-3}n$ in~\cite{Larman} by Larman which only applies to \emph{bounded} polyhedra, but the recent bound $\Delta_u(d,n) \leq 2^{d-3}n$ from~\cite{Labbe:ExpCombSegments} applies.

Results will rely on upper bounds on the diameter which are linear in $n$ for fixed dimension $d$. The following are upper bounds on the diameter of a \emph{bounded} polytope which are linear in fixed dimension. In 1969, Barnette (see~\cite{Barnette}) proved $\Delta_b(d,n) \leq 3^{d-2}n$ if $ n \geq d \geq 4$. In 1970, Larman (see~\cite{Larman}) improved this bound to $\Delta_b(d,n) \leq 2^{d-3}n$ if $ n \geq d \geq 3$. In 1974, Barnette (see~\cite{Barnette:UpperBound}) improved the bound again: though the abstract and final corollary of~\cite{Barnette:UpperBound} both state a bound of $\Delta_b(d,n) \leq \frac132^{d-3}(n-d+\frac52)$ for $n \geq d \geq 3$, the inequality does not hold, e.g., for the $3$-cube. In fact, the proof gives $\Delta_b(d,n) \leq \frac132^{d-2}(n-d+\frac52)$ if $ n \geq d \geq 3$, as stated in Theorem 1 of~\cite{Barnette:UpperBound}, which strictly improves Larman's bound if and only if $n+2d > 5$. All three of these results define their object of study as the convex hull of a finite set of points, thus the results apply to $\Delta_b(d,n)$.

It would be interesting to ascertain if any of the statements in~\cite{Barnette, Barnette:UpperBound, Larman} apply to $\Delta_u(d,n)$ with few changes to the proof. (The definition of removable edges in~\cite{Barnette:UpperBound} requires $3$-regularity, which does not apply to $3$-polyhedron graphs.) Until very recently, the only upper bound to $\Delta_u(d,n)$ linear in $n$ when $d$ is fixed was $\Delta_u(d,n) \leq \Delta_{CLF}(d,n) \leq 2^{d-1}n$ by Eisenbrand et al.{} in~\cite[Theorem 3.2]{Eisenbrand:Diameter-of-Polyhedra}, where $\Delta_{CLF}(d,n)$ denotes the maximum diameter of $d$-dimensional connected layer families with $n$ symbols. In 2015, Labb\'e, Manneville, and Santos (see Theorem 3.5 in~\cite{Labbe:ExpCombSegments}) proved a bound on simplicial complexes which implies $\Delta_u(d,n) \leq 2^{d-3}n$, matching Larman's upper bound for $\Delta_b(d,n)$ in~\cite{Larman}.

The proofs and expositions in the literature interchange the known upper bounds for polytopes and polyhedra. This type of inconsistency appears, e.g., in~\cite{KimSantos:HirschSurvey}, which states~\cite{Kalai:Quasi-polynomial} proves $\Delta_b(d,n) \leq n^{1+\log_2(d)}$, while true, the results of~\cite{Eisenbrand:Diameter-of-Polyhedra} and~\cite{Kalai:Quasi-polynomial} prove the stronger statement $\Delta_u(d,n) \leq n^{1+\log_2(d)}$. We note the Kalai-Kleitman bound~\eqref{equation:KalaiKleitman-Deltau} holds for $\Delta_b(d,n)$.
\begin{lemma}[Labb\'e, Manneville, Santos~\cite{Labbe:ExpCombSegments}]
If $n \geq 2d$, then
\begin{equation}\label{equation:KalaiKleitman-Deltab}
\Delta_b(d,n) \leq \Delta_b(d-1,n-1) + 2\Delta_b(d,\lfloor n/2\rfloor) + 2.
\end{equation}
\end{lemma}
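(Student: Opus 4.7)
The plan is to follow Kalai and Kleitman's original proof of~\eqref{equation:KalaiKleitman-Deltau} from~\cite{Kalai:Quasi-polynomial}, verifying at each step that the recursion stays within the class of polytopes rather than escaping to unbounded polyhedra. Let $u, v$ be vertices of $P$ realizing the diameter $\Delta_b(d, n)$. For each $k \geq 0$ and $w \in \{u, v\}$, let $B_k(w)$ be the set of vertices at graph distance at most $k$ from $w$ in $G(P)$ and $\mathcal{F}_k(w)$ the set of facets of $P$ meeting $B_k(w)$; set $k_u^* := \min\{k : |\mathcal{F}_k(u)| > n/2\}$ and $k_v^*$ analogously. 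The hypothesis $n \geq 2d$ ensures that $|\mathcal{F}_0(u)|\geq d \leq n/2$, so $k_u^*\geq 1$ and the construction below is well-defined.

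First, since $|\mathcal{F}_{k_u^*}(u)| + |\mathcal{F}_{k_v^*}(v)| > n$, the pigeonhole principle yields a common facet $F \in \mathcal{F}_{k_u^*}(u)\cap\mathcal{F}_{k_v^*}(v)$. Because every face of a polytope is itself a polytope, $F$ is a $(d-1)$-polytope with at most $n-1$ facets; choosing $u' \in F \cap B_{k_u^*}(u)$ and $v' \in F \cap B_{k_v^*}(v)$ and routing through $G(F)\subseteq G(P)$ gives
\[
\Delta_b(d, n) \;\leq\; k_u^* + k_v^* + \Delta_b(d-1, n-1).
\]

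It remains to show $k_u^* \leq \Delta_b(d, \lfloor n/2 \rfloor) + 1$ (the bound for $k_v^*$ being symmetric). Let $\mathcal{G} := \mathcal{F}_{k_u^* - 1}(u)$ with $|\mathcal{G}| \leq \lfloor n/2 \rfloor$, and set $Q := \bigcap_{F \in \mathcal{G}} H_F^{-}$. Every vertex in $B_{k_u^* - 1}(u)$ has all of its defining facets in $\mathcal{G}$, so it persists as a vertex of $Q$; moreover every edge of $G(P)$ between two such vertices remains an edge of $G(Q)$, since a vertex of $Q$ cannot lie in the relative interior of a one-face of $Q$. The usual Kalai-Kleitman bookkeeping inside $Q$ then bounds $k_u^*-1$ by $\mathrm{diam}(G(Q))$.

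The main obstacle is that $Q$ is a $d$-polyhedron with at most $\lfloor n/2 \rfloor$ facets but is typically unbounded, so $\Delta_b(d, \lfloor n/2 \rfloor)$ cannot be invoked on $Q$ directly. Because $P$ is a polytope, however, $u$ is a vertex of $Q$ and hence $Q$ is pointed; a projective transformation $\pi$ sending a hyperplane disjoint from the translated recession cone of $Q$ at $u$ to infinity carries $Q$ to a combinatorially equivalent $d$-polytope $\pi(Q)$ with the same number of facets and isomorphic graph, so $\mathrm{diam}(G(Q)) = \mathrm{diam}(G(\pi(Q))) \leq \Delta_b(d, \lfloor n/2 \rfloor)$. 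Combining with the first step yields the claimed recurrence. An alternative and arguably cleaner route, and the one taken in~\cite{Labbe:ExpCombSegments}, is to dualize: the restriction to $\mathcal{G}$ corresponds to the induced subcomplex on a vertex subset of the polar simplicial complex $\partial P^*$, which stays within the class of normal simplicial complexes and produces the same recursion without any projective rectification.
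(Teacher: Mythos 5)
Your plan runs the primal Kalai--Kleitman argument and then tries to repair the unboundedness of the auxiliary polyhedron $Q=\bigcap_{F\in\mathcal{G}}H_F^-$ by a projective transformation. That repair does not work. A projective transformation $\pi$ whose domain contains $Q$ is a homeomorphism onto its image, so if $Q$ is unbounded (hence non-compact), $\pi(Q)$ is also non-compact and cannot be a polytope. What is true is that one can choose $\pi$ so that the \emph{closure} $\overline{\pi(Q)}$ is a polytope, but this closure acquires an extra facet (the face at infinity, i.e., the projectivization of the recession cone of $Q$) together with new vertices and edges absent from $G(Q)$. Thus $\overline{\pi(Q)}$ may have up to $\lfloor n/2\rfloor+1$ facets, not $\lfloor n/2\rfloor$, and its graph properly contains $G(Q)$ rather than being isomorphic to it. Even granting that the "usual bookkeeping" inside this larger graph could be salvaged, you would obtain $\Delta_b(d,\lfloor n/2\rfloor+1)$ on the right-hand side, which is not the claimed recursion. (Incidentally, the line $|\mathcal{F}_0(u)|\geq d\leq n/2$ only gives $k_u^*\geq 1$ after you observe that $\Delta_b$ is attained on simple polytopes, so that $|\mathcal{F}_0(u)|=d$; this should be stated.)

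Your closing "alternative route" is the one that actually works, but it is misdescribed. The cited proof does not pass to the induced subcomplex of $\partial P^*$ on a vertex subset, nor does it retreat to general normal simplicial complexes --- doing so would replace the $\Delta_b(d,\lfloor n/2\rfloor)$ term by a diameter bound for a strictly larger class, weakening the recursion. Instead, after polarizing to a simplicial $d$-polytope with $n$ vertices, one sets $P_+=\operatorname{conv}(\mathcal{F}^+_{k-1})$, the \emph{convex hull} of the vertices covered by the facet-ball of radius $k-1$. Because $P_+$ is a genuine $d$-polytope with at most $\lfloor n/2\rfloor$ vertices, $\Delta_b(d,\lfloor n/2\rfloor)$ applies to it directly, and the observation that $(k-1)$-step dual walks in $P_+$ starting from $F^+$ remain inside $\mathcal{F}^+_{k-1}$ gives $k-1\leq\Delta_b(d,\lfloor n/2\rfloor)$. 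This convex-hull step is precisely what keeps the recursion within the class of polytopes without any projective rectification, and it is the idea your proposal is missing.
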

\begin{proof}
The proof is given in the polar setting. Since $\Delta_b(d,n)$ is maximized among simplicial polytopes, let $P$ be a simplicial $d$-polytope with $n$ vertices. Let $F^+$ and $F^-$ be two facets of $P$. We want to bound the dual-graph distance between $F^+$ and $F^-$.

For each $i=0, 1, 2, \dots$ let $\mathcal{F}^+_i$ and $\mathcal{F}^-_i$ be the set of facets at distance at most $i$ from $F^+$ and $F^-$, respectively. In particular, $\mathcal{F}^+_0 = \{F^+\}$. Let $k$ be the smallest index for which $\mathcal{F}^+_k$ and $\mathcal{F}^-_k$ have a vertex in common. Let $v$ be such a vertex. Then:

The vertex figure of $P$ at $v$ is a $(d-1)$-polytope with at most $n-1$ vertices, and the star of $v$ shares a facet with both $\mathcal{F}^+_k$ and $\mathcal{F}^-_k$. Therefore, $\operatorname{dist}(F^+,F^-) \leq 2k + \Delta_b(d-1,n-1)$.

At least one of $\mathcal{F}^+_{k-1}$ or $\mathcal{F}^-_{k-1}$ has at most $n/2$ vertices, because otherwise they would have a vertex in common. Without loss of generality, suppose $\mathcal{F}^+_{k-1}$ has at most $n/2$ vertices. Then $P_+=\operatorname{conv}(\mathcal{F}^+_{k-1})$ is a $d$-polytope with at most $n/2$ vertices. Moreover, all paths of length $k-1$ from $F^+$ in $P_+$ are paths in $\mathcal{F}^+_{k-1}$ as well. Hence, $k-1 \le \Delta_b(d,n/2)$.
\end{proof}

Recent research has focused on combinatorial abstractions of polytopes and polyhedra, with the first results due to Eisenbrand, H\"ahnle, Razborov, and Rothvo\ss{} (see~\cite{Eisenbrand:Diameter-of-Polyhedra}). Eisenbrand et al.{} introduced connected layer families, a structure recording the vertex-facet incidences represented by sets of symbols, and abstracting the concept of distances in so-called layers, which is based on the polyhedral property that every face of a polyhedron has a connected graph. In~\cite{Eisenbrand:Diameter-of-Polyhedra}, Eisenbrand et al.{} prove that the diameters of $\frac{n}{4}$-dimensional connected layer families with $n$ symbols is in $\Omega(n^2/\log n)$. Bogart and Kim extend the $\Omega(n^2/\log n)$ result in~\cite{Bogart:SuperlinearSPG} to the case when a layer family also satisfies a combinatorial adjacency condition and a cardinality condition, coming from the polyhedral property that two simple vertices in a $d$-dimensional polyhedron are adjacent if and only if they share $d-1$ facets. Similar incidence structures were studied by Santos in~\cite{Santos:RecentProgress} and by Labb\'e et al. (see~\cite{Labbe:ExpCombSegments}) in the dual setting, where the connectedness property of layer families corresponds to the condition that a pure $(d-1)$-dimensional simplicial complex is normal. In~\cite{Labbe:ExpCombSegments}, Labb\'e, Manneville, and Santos prove that the diameter of $(d-1)$-dimensional pure and normal pseudomanifolds without boundary is at most $2^{d-3}n$.

As in~\cite{Sukegawa:KalaiKleitman} and~\cite{Todd:KalaiKleitman}, all logarithms are base $2$ and we readily use the identity $x^{\log(y)}=y^{\log(x)}$ for positive reals $x,y$.

\section{Upper bounds for diameters of polyhedra}\label{section:polyhedra}

Following~\cite{Sukegawa:KalaiKleitman} by Sukegawa and Kitahara, we recursively define a doubly-indexed sequence $\tilde\Delta_u(d,n)$ for $n \geq d \geq 3$ by the following:
\begin{equation*}
\tilde\Delta_u(d,n) = \left\{
\begin{array}{ll}
n-3 & \text{if } d=3\\
0 &\text{if } n=d > 3\\
\tilde\Delta_u(d-1,n-1) &\text{if } 3 < d< n < 2d\\
\tilde\Delta_u(d-1,n-1)+2 \tilde\Delta_u(d,\lfloor n/2\rfloor)+2  &\text{if } 6 < 2d \leq n.
\end{array}
\right.
\end{equation*}
Since $\Delta_u(3,n) \leq n-3$ by~\cite[Theorem 2.4]{Klee:PathsII}, the recursion~\eqref{equation:KalaiKleitman-Deltau} implies $\Delta_u(d,n) \leq \tilde\Delta_u(d,n)$ for $n \geq d \geq 3$.

The result below is due to Sukegawa and Kitahara (see~\cite{Sukegawa:KalaiKleitman}), which improves the bound of Todd (see~\cite{Todd:KalaiKleitman}), though we present a different proof here to preview our results in Sections~\ref{section:polytopes} and~\ref{section:complexes}.
\begin{theorem}\label{theorem:DeltaU-k0}
If $n \geq d \geq 3$, then $\Delta_u(d,n) \leq (n-d)^{\log (d-1)}$.
\end{theorem}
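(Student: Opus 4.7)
The plan is to prove $\tilde\Delta_u(d,n)\le (n-d)^{\log(d-1)}$ for the recursively-defined sequence, by strong induction on $d+n$; the theorem then follows from the inequality $\Delta_u(d,n)\le\tilde\Delta_u(d,n)$ which the excerpt has already established via the Kalai--Kleitman recurrence~\eqref{equation:KalaiKleitman-Deltau} and $\Delta_u(3,n)\le n-3$. The two explicit branches of the definition dispatch the base cases immediately: for $d=3$ the target reads $(n-3)^{\log 2}=n-3$ with equality, and for $n=d>3$ both sides vanish. In the intermediate branch $3<d<n<2d$ the recursion and induction give $\tilde\Delta_u(d,n)=\tilde\Delta_u(d-1,n-1)\le (n-d)^{\log(d-2)}\le (n-d)^{\log(d-1)}$, using $n-d\ge 1$.

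The substance lies in the recursive branch $n\ge 2d$ (which forces $d\ge 4$). Applying the recurrence and the inductive bounds to $\tilde\Delta_u(d-1,n-1)$ and $\tilde\Delta_u(d,\lfloor n/2\rfloor)$, and using $\lfloor n/2\rfloor-d\le (n-2d)/2$ together with the identity $2^{\log(d-1)}=d-1$, I would obtain
\[
\tilde\Delta_u(d,n)\;\le\;(n-d)^{\log(d-2)}\;+\;\frac{2}{d-1}\,(n-2d)^{\log(d-1)}\;+\;2.
\]
The crucial step is then to write $(n-2d)^{\log(d-1)}=(n-d)^{\log(d-1)}(1-d/(n-d))^{\log(d-1)}$ and invoke the elementary inequality $(1-x)^{\alpha}\le 1-x$ for $\alpha\ge 1$ and $x\in[0,1]$ (a consequence of the convexity of $(1-x)^\alpha$ and matching endpoint values, applicable here because $\log(d-1)\ge 1$). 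The hypothesis $n\ge 2d$ ensures $d/(n-d)\in(0,1]$, so this substitution yields $(n-2d)^{\log(d-1)}\le (n-d)^{\log(d-1)}-d\,(n-d)^{\log(d-1)-1}$.

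After this manoeuvre, proving the target reduces to a scalar inequality in the single variable $t:=n-d\ge d$, namely
\[
t^{\log(d-2)}+2\;\le\;\frac{d-3}{d-1}\,t^{\log(d-1)}\;+\;\frac{2d}{d-1}\,t^{\log(d-1)-1}.
\]
I would verify this first at the boundary $t=d$, where the right-hand side collapses to $d^{\log(d-1)}$ and, via $x^{\log y}=y^{\log x}$, the inequality becomes $(d-1)^{\log d}-(d-2)^{\log d}\ge 2$; the mean value theorem yields this for $d\ge 4$. For $t>d$ the gap between the two sides only widens, since the dominant right-hand-side term grows like $t^{\log(d-1)}$ while the left-hand side grows only like $t^{\log(d-2)}$. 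I expect this last monotonicity bookkeeping---rather than any genuinely new idea---to be the main technical nuisance; everything else is straightforward manipulation using the identity $x^{\log y}=y^{\log x}$.
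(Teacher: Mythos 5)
Your approach to the inductive step is genuinely different from the paper's. The paper reduces the case $n\ge 2d$ to a cutoff regime $n-d\ge 8$ (so that $\log(n-d)\ge 3$, letting one replace $\left(\tfrac{d-2}{d-1}\right)^{\log(n-d)}$ by $\left(\tfrac{d-2}{d-1}\right)^{3}$), handles the leftover small cases $2d\le n<d+8$ by machine checking via an appendix, and treats $d=4$ separately because the resulting bracket inequality fails there. You instead use $\lfloor n/2\rfloor-d\le\tfrac12(n-2d)$ together with the convexity bound $(1-x)^{\alpha}\le 1-x$ ($\alpha\ge 1$, $x\in[0,1]$) to pass from $(n-2d)^{\log(d-1)}$ to $(n-d)^{\log(d-1)}-d(n-d)^{\log(d-1)-1}$, reducing everything to the single-variable inequality
\[
t^{\log(d-2)}+2\;\le\;\frac{d-3}{d-1}\,t^{\log(d-1)}+\frac{2d}{d-1}\,t^{\log(d-1)-1},\qquad t\ge d,
\]
which collapses at $t=d$ to $(d-1)^{\log d}-(d-2)^{\log d}\ge 2$ and is then handled cleanly by the mean value theorem. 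That endpoint verification is correct and, if the rest went through, would be uniform in $d\ge 4$ with no computational appendix and no special treatment of $d=4$ --- a real improvement.

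However, there is a genuine gap at the last step. You dismiss the extension from $t=d$ to $t>d$ as ``monotonicity bookkeeping,'' arguing that ``the gap only widens'' because the dominant right-hand term grows like $t^{\log(d-1)}$ while the left grows like $t^{\log(d-2)}$. Comparing asymptotic growth rates does not establish monotone widening: a gap can narrow on a bounded range before the faster-growing term takes over, and you have said nothing to exclude this. What you in fact need is that the derivative of the difference is nonnegative for $t\ge d$, which (even after discarding the nonnegative lower-order term coming from $t^{\log(d-1)-1}$) amounts to
\[
\frac{(d-3)\log(d-1)}{d-1}\,t^{\log(d-1)-1}\;\ge\;\log(d-2)\,t^{\log(d-2)-1},
\]
i.e.\ (checking the worst case $t=d$, since the left/right ratio is increasing in $t$) to
\[
\left(\frac{d-1}{d-2}\right)^{\log d}\;\ge\;\frac{(d-1)\log(d-2)}{(d-3)\log(d-1)}\qquad\text{for }d\ge 4.
\]
This appears to be true, but it is an inequality of a similar flavor and difficulty to the one you set out to prove, not mere bookkeeping; until it is established your inductive step is not closed. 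The paper sidesteps exactly this issue by taking $n-d$ large and sweeping the finite remainder under a {\tt SAGE} computation, which is less elegant but is complete as written.
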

\begin{proof}
Either $d \in \{3,4\}$ or $d \geq 5$. If $d=3$, then the stated bound of $(n-3)^{\log(3-1)}$ aligns with the known bound of $\Delta_u(3,n)\leq n-3$ in~\cite{Klee:PathsII} for all $n \geq 3$. If $d=4$, then Appendix~\ref{appendix:DeltaU-k0} shows $\tilde\Delta_u(4,n) \leq (n-4)^{\log (4-1)}$ for $4 \leq n \leq 45$ and~\cite{Eisenbrand:Diameter-of-Polyhedra} implies $\Delta_u(4,n) \leq 2^{4-1}n \leq (n-4)^{\log(4-1)}$ for $n \geq 46$.

Suppose $d \geq 5$. The cases when $n < 2d$ follow from monotonicity, so we assume $n \geq 2d$. Appendix~\ref{appendix:DeltaU-k0} also checks $\tilde\Delta_u(d,n) \leq (n-d)^{\log (d-1)}$ holds in the cases when $2d \leq n < d+8$. Thus, we assume $d \geq 5$ and $n-d\geq 8$, which implies $\log(n-d)\geq 3$. Then,
\begin{align*}
\Delta_u(d,n) &\leq \Delta_u(d-1,n-1) + 2 \cdot \Delta_u(d,\lfloor n/2 \rfloor) + 2 \\
&\textstyle \leq (n-d)^{\log(d-2)} + 2 \cdot (\frac{n}{2} - d)^{\log (d-1)} + 2\\
&= (d-2)^{\log(n-d)} + 2  \cdot (d-1)^{\log(n/2-d)} + 2\\
&=  \left(\frac{d-2}{d-1}\right)^{\log(n-d)} (d-1)^{\log(n-d)} + 2 \cdot (d-1)^{\log(n/2-d)} + 2\\
&\leq \left(\frac{d-2}{d-1}\right)^{\log(n-d)} (d-1)^{\log(n-d)} + 2(d-1)^{\log((n-d)/2)} + 2 \\
&= \left(\frac{d-2}{d-1}\right)^{\log(n-d)} (d-1)^{\log(n-d)} + \frac2{d-1} \cdot (d-1)^{\log(n-d)} + 2\\
&\leq \left(\frac{d-2}{d-1}\right)^{3} (d-1)^{\log(n-d)} + \frac2{d-1} \cdot (d-1)^{\log(n-d)} + 2\\
&= (d-1)^{\log(n-d)} \left[ \left(\frac{d-2}{d-1}\right)^{3}  + \frac2{d-1}   + \frac{2}{(d-1)^{\log(n-d)}} \right]\\
&\leq (n-d)^{\log (d-1)},
\end{align*}
which proves the result by induction on $n$.
\end{proof}
As a corollary, this easily implies that H\"ahnle's conjectured upper bound of $(n-1)d$ in~\cite{Hahnle:Diplomathesis} holds for $n-d \leq 4$, since $(n-d)^{\log(d-1)}\leq 4^{\log(d-1)}=(d-1)^2\leq (n-1)d$. Sukegawa and Kitahara (see~\cite{Sukegawa:KalaiKleitman}) also proved that if $d = 4$ and $n \geq 9$, or $d \geq 5$ and $n \geq d+3$, then $\Delta_u(d,n)\leq (n-d-1)^{\log(d-1)}$. As with Theorem~\ref{theorem:DeltaU-k0}, this result is a tail bound.

\section{Upper bounds for diameters of polytopes}\label{section:polytopes}

In this section, we prove an upper bound on $\Delta_b(d,n)$ using inequality~\eqref{equation:KalaiKleitman-Deltab} and Theorem 2.1 in~\cite{KimSantos:HirschSurvey}, which gives the tight bound $\Delta_b(3,n) \leq \lfloor\frac{2n}{3}\rfloor-1$, originally due to Klee in~\cite{Klee:PathsII}.

We recursively define a doubly-indexed sequence $\tilde\Delta_b(d,n)$ for $n \geq d \geq 3$ by the following:
\begin{equation*}
\tilde\Delta_b(d,n) = \left\{
\begin{array}{ll}
\lfloor\frac{2n}{3}\rfloor-1 & \text{if } d=3\\
0 &\text{if } n=d > 3\\
\tilde\Delta_b(d-1,n-1) &\text{if } 3 < d < n < 2d\\
\tilde\Delta_b(d-1,n-1)+2 \tilde\Delta_b(d,\lfloor n/2\rfloor)+2  &\text{if } 6 < 2d \leq n.
\end{array}
\right.
\end{equation*}
Since $\Delta_b(3,n) \leq \lfloor\frac{2n}{3}\rfloor-1$, the recursion~\eqref{equation:KalaiKleitman-Deltab} implies $\Delta_b(d,n) \leq \tilde\Delta_b(d,n)$ for $n > d \geq 3$. By focusing on polytopes, a bound similar to Theorem~\ref{theorem:DeltaU-k0} with improved leading coefficient holds.
\begin{theorem}\label{theorem:DeltaB-k0}
If $n > d \geq 3$, then $\Delta_b(d,n) \leq [\frac23(n-d+\frac32)]^{\log (d-1)}$.
\end{theorem}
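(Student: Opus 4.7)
The plan is to model the proof on Theorem~\ref{theorem:DeltaU-k0}, with the substitution $m:=\frac{2}{3}(n-d+\frac{3}{2})$ playing the role that $n-d$ plays for Sukegawa-Kitahara. The target inequality then reads $\Delta_b(d,n)\leq m^{\log(d-1)}$. For the base case $d=3$, Klee's bound gives $\Delta_b(3,n)\leq \lfloor 2n/3\rfloor-1 \leq \frac{2}{3}(n-\frac{3}{2}) = m^{\log 2}$, which is tight up to the floor. For $d=4$, I would combine a linear-in-$n$ bound such as Larman's $\Delta_b(4,n)\leq 2n$ from~\cite{Larman}, which dominates $m^{\log 3}$ once $n$ exceeds an explicit threshold, with direct verification of the finitely many small cases of the recursion defining $\tilde\Delta_b(4,n)$ (to be tabulated in an appendix following the style of Appendix~\ref{appendix:DeltaU-k0}).

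For $d\geq 5$, the main step is induction on $n$. The cases $n<2d$ reduce by monotonicity, and a finite window of the form $2d\leq n<d+\text{const}$ is handled by direct computation. Assuming $n-d$ is large enough that $\log m \geq 3$, i.e.\ $m\geq 8$, I would apply~\eqref{equation:KalaiKleitman-Deltab} and the inductive hypothesis to both summands, using the elementary estimate $\frac{2}{3}(\lfloor n/2\rfloor-d+\frac{3}{2}) \leq m/2$ (equivalent to $d\geq\frac{3}{2}$), to obtain
\begin{align*}
\Delta_b(d,n) &\leq \bigl[\tfrac{2}{3}(n-d+\tfrac{3}{2})\bigr]^{\log(d-2)} + 2\bigl[\tfrac{2}{3}(\lfloor n/2\rfloor-d+\tfrac{3}{2})\bigr]^{\log(d-1)} + 2\\
&\leq m^{\log(d-2)} + 2(m/2)^{\log(d-1)} + 2.
\end{align*}
Applying the identity $x^{\log y}=y^{\log x}$ to the first term factors out $(d-1)^{\log m}=m^{\log(d-1)}$, and the induction closes exactly when the bracketed expression
\[
\left(\frac{d-2}{d-1}\right)^{\log m} + \frac{2}{d-1} + \frac{2}{m^{\log(d-1)}}
\]
is at most $1$, in direct parallel with Theorem~\ref{theorem:DeltaU-k0}.

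The main obstacle is arithmetic bookkeeping rather than a conceptual difficulty. For $d\geq 5$ and $\log m\geq 3$, one has $(\frac{d-2}{d-1})^{\log m}\leq (\frac{d-2}{d-1})^3$, and the expansion $(\frac{d-2}{d-1})^3 + \frac{2}{d-1} = 1 - \frac{1}{d-1} + O((d-1)^{-2})$ leaves just enough slack for the additive $\frac{2}{m^{\log(d-1)}}$ tail once $m\geq 8$. The finite collection of cases not directly covered by this slack (namely $d\in\{5,\ldots,10\}$ with $2d\leq n<d+11$, together with the $d=4$ initial segment and all $n<2d$) is the routine content I would defer to an appendix. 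The improvement over Theorem~\ref{theorem:DeltaU-k0} comes solely from the additive $\frac{3}{2}$ inside $m$, which is exactly the difference between Klee's tight cubic bound for $3$-polytopes and the weaker $n-3$ bound available for $3$-polyhedra.
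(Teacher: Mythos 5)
Your plan matches the paper's proof in structure for $d\geq 5$: the $d=3$ base case from Klee's bound, monotonicity for $n<2d$, a finite appendix check, and the Kalai--Kleitman induction using the threshold $\log m\geq 3$ together with the identity $x^{\log y}=y^{\log x}$. Your estimate $\frac{2}{3}(\lfloor n/2\rfloor-d+\frac{3}{2})\leq m/2$ (valid once $d\geq\frac{3}{2}$) is a cleaner packaging of the $\lfloor n/2\rfloor$ reduction than the paper's exponent-shifting chain, and your closure analysis, namely that the bracketed factor is at most $1$ for $d\geq 5$ once $\log m\geq 3$, is exactly right: the slack is $1-(\tfrac{d-2}{d-1})^{3}-\tfrac{2}{d-1}=\tfrac{d^2-5d+5}{(d-1)^3}$, which exceeds $\tfrac{2}{(d-1)^3}$ precisely when $d^2-5d+3\geq 0$, i.e.\ $d\geq 5$.

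The one substantive divergence is $d=4$, where you follow the Theorem~\ref{theorem:DeltaU-k0} recipe (Larman's linear bound $\Delta_b(4,n)\leq 2n$ for $n$ past an explicit threshold, plus a finite appendix table), whereas the paper folds $d=4$ into the same inductive display used for $d\geq 5$ after verifying $2d\leq n\leq d+10$ in the appendix. Your choice turns out to be forced. For $d=4$ the paper's bracketed factor is
\[
\left(\tfrac{2}{3}\right)^{3}+\tfrac{2}{3}+\tfrac{2}{m^{\log 3}}=\tfrac{26}{27}+\tfrac{2}{m^{\log 3}},
\]
which is at most $1$ only when $m^{\log 3}\geq 54$, i.e.\ $m\gtrsim 12.4$, i.e.\ roughly $n\geq 22$; at the cutoff $m=8$ this factor equals $\tfrac{28}{27}>1$. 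Since the paper's appendix for $d=4$ stops at $n=d+10=14$, the window $15\leq n\leq 21$ falls between the appendix and the range where the inductive step actually closes, so the paper's argument as written leaves a gap there (the theorem itself is still true on that window, as one can check by direct computation of $\tilde\Delta_b(4,n)$). Your Larman-plus-appendix treatment of $d=4$ avoids that issue entirely.
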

\begin{proof}
If $d=3$, then by~\cite{KimSantos:HirschSurvey}, $\Delta_b(3,n)  \leq \lfloor\frac{2n}{3}\rfloor-1$ which is at most the stated bound $(\frac23n-1)^{\log(3-1)}$.

The cases when $n < 2d$ follow from monotonicity, so we assume $n \geq 2d$. Suppose $4 \leq d \leq 10$.  Appendix~\ref{appendix:DeltaB-k0} checks the inequality holds when $4 \leq d \leq 10$ and $2d \leq n \leq d+10$.
Since $d \geq 11$ and $2d \leq n \leq d+10$ cannot hold, we have verified all cases when $n \leq d+10$ for all $d$.

We thus assume $n-d \geq 11$ so $\log(\frac23(n-d+\frac32))\geq 3$. Thus,
\begin{align*}
\Delta_u(d,n) &\leq \Delta_u(d-1,n-1) + 2 \cdot \Delta_u(d,\lfloor n/2 \rfloor) + 2 \\
&\leq \textstyle (\frac23(n-d+\frac32))^{\log(d-2)} + 2 \cdot (\frac23(\frac{n}{2}-d+\frac32-1))^{\log (d-1)} + 2\\
&= (d-2)^{\log\frac23(n-d+\frac32)} + 2  \cdot (d-1)^{\log\frac23(\frac{n}{2}-d+\frac32-1)} + 2\\
&=  \left(\frac{d-2}{d-1}\right)^{\log\frac23(n-d+\frac32)} (d-1)^{\log\frac23(n-d+\frac32)} + 2 \cdot (d-1)^{\log\frac23(\frac{n}{2}-d+\frac32-1)} + 2\\
&\leq \left(\frac{d-2}{d-1}\right)^{\log\frac23(n-d+\frac32)} (d-1)^{\log\frac23(n-d+\frac32)} + 2(d-1)^{\log\frac23(n-d+\frac32-1)/2} + 2 \\
&= \left(\frac{d-2}{d-1}\right)^{\log\frac23(n-d+\frac32)} (d-1)^{\log\frac23(n-d+\frac32)} + \frac2{d-1} \cdot (d-1)^{\log\frac23(n-d+\frac32)} + 2\\
&\leq \left(\frac{d-2}{d-1}\right)^{3} (d-1)^{\log\frac23(n-d+\frac32)} + \frac2{d-1} \cdot (d-1)^{\log\frac23(n-d+\frac32)} + 2\\
&= (d-1)^{\log\frac23(n-d+\frac32)} \left[ \left(\frac{d-2}{d-1}\right)^{3}  + \frac2{d-1}   + \frac{2}{(d-1)^{\log\frac23(n-d+\frac32)} } \right]\\
&\leq \textstyle (\frac23(n-d+\frac32))^{\log (d-1)}.
\end{align*}
\end{proof}
The argument can be slightly refined using $\Delta_b(4,9)=\Delta_b(5,10)=5$ from~\cite{Klee:d-step} by Klee, 
$\Delta_b(4,10)=5$ and $\Delta_b(5,11)=6$ in~\cite{Goodey} by Goodey, $\Delta_b(4,11)=\Delta_b(6,12)=6$ in~\cite{Bremner:DiameterFewFacets}, $\Delta_b(4,12)=\Delta_b(5,12)=7$ in~\cite{Bremner:MoreBounds} by Bremner et al., and $\Delta_b(n,d)\leq n-d$ if $n-d \leq 6$ in~\cite{Bremner:DiameterFewFacets}.

\section{Upper bounds for normal simplicial complexes}\label{section:complexes}

In this section, we study the diameters of pure $(d-1)$-dimensional normal simplicial complexes with the pseudomanifold property. Kalai and Kleitman proved that normal simplicial complexes satisfy the Kalai-Kleitman recursion, and this work was extended by Eisenbrand et al.{} (see~\cite{Eisenbrand:Diameter-of-Polyhedra}) to the setting of connected layer families:
\begin{theorem}[\cite{Eisenbrand:Diameter-of-Polyhedra}, \cite{Kalai:DiameterHeight}]
The dual diameter of pure $(d-1)$-dimensional normal simplicial complexes is at most $n^{1+\log(d)}$.
\end{theorem}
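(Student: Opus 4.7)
The plan is to follow the structure of Kalai-Kleitman's original argument in the dual setting. Write $\Delta_N(d,n)$ for the maximum dual-graph diameter over pure $(d-1)$-dimensional normal simplicial complexes with the pseudomanifold property on $n$ vertices; the goal is to show $\Delta_N(d,n) \leq n^{1 + \log d}$.

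The first step is to establish the Kalai-Kleitman recursion in this setting: for $n \geq 2d$,
\[
\Delta_N(d,n) \leq \Delta_N(d-1, n-1) + 2\,\Delta_N(d, \lfloor n/2 \rfloor) + 2.
\]
The argument is formally identical to the proof of inequality~(\ref{equation:KalaiKleitman-Deltab}) reproduced earlier in the excerpt, but with facets of a simplicial polytope replaced by maximal faces of a normal complex and vertex figures replaced by links. Fix two facets $F^+, F^-$ and, for each $i \geq 0$, let $\mathcal{F}^+_i$ and $\mathcal{F}^-_i$ be the balls of dual-graph radius $i$ around them. Choose the minimal $k$ for which $\mathcal{F}^+_k$ and $\mathcal{F}^-_k$ share a vertex $v$. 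The $(d-2)$-dimensional link of $v$ is itself a normal complex on at most $n-1$ vertices (this is the step that genuinely uses the \emph{normality} hypothesis rather than mere pseudomanifold-ness), giving a contribution of at most $2k + \Delta_N(d-1,n-1)$ to the distance through $v$. The halving term follows because at least one of $\mathcal{F}^+_{k-1}$, $\mathcal{F}^-_{k-1}$ uses at most $\lfloor n/2 \rfloor$ of the $n$ vertices: otherwise the pigeonhole principle combined with the minimality of $k$ would force them to share a vertex already at level $k-1$.

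With the recursion in hand, the second step is a routine induction on the pair $(d,n)$, with $\Psi(d,n) = n^{1 + \log d}$ as the target function. Assuming $\Delta_N(d',n') \leq (n')^{1+\log d'}$ for all smaller admissible pairs and substituting into the recursion, then dividing through by $n^{1+\log d}$, the required inequality reduces to
\[
\left(\tfrac{d-1}{d}\right)^{\log n} + \tfrac{1}{d} + \tfrac{2}{n^{1+\log d}} \leq 1,
\]
which holds because $((d-1)/d)^{\log n} \leq (d-1)/d$ as soon as $n \geq 2$, leaving strict room to absorb the $2/n^{1+\log d}$ term. The small-$n$ base cases are handled by the trivial observation that $\Delta_N(d,n) \leq n \leq n^{1+\log d}$ whenever $n \leq 2d$, so monotonicity closes the induction.

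The main obstacle I expect is the bookkeeping at the link of $v$: one must verify that normality is inherited by vertex links in a pure simplicial complex, and that the $(d-2)$-dimensional link really contributes $\Delta_N(d-1, n-1)$ to the travel distance through $v$. This is the step that distinguishes the normal setting from the polyhedral one, and once it is checked the rest of the argument is the same quasipolynomial calculation underlying Theorem~\ref{theorem:DeltaU-k0}.
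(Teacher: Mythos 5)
This theorem is stated in the paper with citations to~\cite{Eisenbrand:Diameter-of-Polyhedra} and~\cite{Kalai:DiameterHeight}; the paper offers no proof of its own, so there is nothing internal to compare against. Your strategy --- establish the Kalai--Kleitman recursion for normal complexes by passing through a vertex link, then close with an induction against the target $n^{1+\log d}$ --- is exactly the standard route, and the recursion derivation is fine (normality is indeed what ensures the link is a valid $(d-2)$-dimensional complex with connected dual graph).

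However, the final inequality as you have written it does not close. You claim the required estimate
\[
\left(\tfrac{d-1}{d}\right)^{\log n} + \tfrac{1}{d} + \tfrac{2}{n^{1+\log d}} \leq 1
\]
follows because $\left(\tfrac{d-1}{d}\right)^{\log n} \leq \tfrac{d-1}{d}$ once $n \geq 2$, ``leaving strict room'' for the $2/n^{1+\log d}$ term. But $\tfrac{d-1}{d} + \tfrac1d = 1$ exactly, so no room is left at all; with that bound the inequality reduces to $1 + 2/n^{1+\log d} \leq 1$, which is false. The fix is to use the full strength of the hypothesis $n \geq 2d$ under which the recursion applies: this gives $\log n \geq 1 + \log d \geq 2$ for $d \geq 2$, hence
\[
\left(\tfrac{d-1}{d}\right)^{\log n} \leq \left(\tfrac{d-1}{d}\right)^{2} = 1 - \tfrac{2d-1}{d^2},
\]
so that $\left(\tfrac{d-1}{d}\right)^{\log n} + \tfrac1d \leq 1 - \tfrac{d-1}{d^2}$, and the remaining slack $(d-1)/d^2$ easily absorbs $2/n^{1+\log d}$ since $n^{1+\log d} \geq (2d)^{1+\log d} \geq 2d^2$. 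Without invoking $n\ge 2d$ in the exponent the induction genuinely fails, so this is a gap, not a typo. You should also say explicitly what the base case in $d$ is (e.g.\ $d=2$, dual graph a path or cycle, diameter $\le \lfloor n/2\rfloor \leq n^2$), since ``$\Delta_N(d,n)\le n$ whenever $n\le 2d$'' is neither obviously true nor what the dimension-reduction step for $n<2d$ actually produces.
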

In~\cite{Labbe:ExpCombSegments}, Labb\'e, Manneville, and Santos prove the diameter is linear in fixed dimension, the analogue of~\cite{Barnette, Barnette:UpperBound, Larman}.
\begin{theorem}[Labb\'e, Manneville, Santos~\cite{Labbe:ExpCombSegments}]\label{theorem:complex-linear-fixed}
The dual diameter of $(d-1)$-dimensional pure and normal pseudomanifolds without boundary is at most $2^{d-3}n$.
\end{theorem}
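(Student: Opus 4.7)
The plan is to prove the bound by induction on $d$, paralleling Larman's argument for $\Delta_b(d,n) \leq 2^{d-3}n$ in the polytopal setting but carried out intrinsically on the simplicial complex through the link of a vertex. The key structural input is that in a pure $(d-1)$-dimensional normal pseudomanifold without boundary, the link of any vertex $v$ is itself a pure $(d-2)$-dimensional normal pseudomanifold without boundary, has at most $n-1$ vertices, and its dual graph is canonically isomorphic to the dual graph of the star of $v$ (any two facets of the star that share a ridge must share it through $v$, since otherwise both would equal $R \cup \{v\}$ for the same ridge $R$). By the inductive hypothesis, the dual distance between any two facets of the star of $v$ is therefore at most $2^{d-4}(n-1)$.

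For the base case $d = 3$, a pure $2$-dimensional normal pseudomanifold without boundary is a closed triangulated surface, and I would establish the bound $\leq n$ directly by a combinatorial argument on the sequence of vertex-swaps along a shortest dual path, verifying the inequality against the small examples (tetrahedron, octahedron, icosahedron, minimal torus triangulation) as a sanity check.

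For the inductive step I would fix two facets $F, F'$ of $C$ at dual distance $k$ and a shortest dual path $F = G_0, G_1, \ldots, G_k = F'$. Since consecutive facets in a pseudomanifold share a ridge, each step $G_i \to G_{i+1}$ swaps out exactly one vertex, so the path is encoded by a sequence of $k$ vertex swaps. The strategy is to partition the path into blocks, each contained in the star of some distinguished vertex $v$, apply the inductive bound $2^{d-4}(n-1)$ inside each block, and charge blocks to vertices so that each of the $n$ vertices absorbs at most one block, giving the total bound $2 \cdot 2^{d-4} \cdot n = 2^{d-3}n$.

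The main obstacle will be the sharpness of this accounting in the inductive step. A naive decomposition only yields a Kalai--Kleitman-type recursion $\Delta(d,n) \leq \Delta(d-1,n-1) + 2\Delta(d,\lfloor n/2 \rfloor) + 2$, which produces $3 \cdot 2^{d-4} n + O(1)$ rather than the desired $2 \cdot 2^{d-4} n$. To recover the tight constant I expect one must choose the block decomposition canonically---for instance along maximal runs of facets sharing a common vertex---and exploit normality to stitch the endpoints of successive blocks together within a single star, while using the pseudomanifold condition to keep the single-vertex-swap structure intact at block boundaries.
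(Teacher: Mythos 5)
The paper does not prove this theorem; it cites it from Labb\'e--Manneville--Santos~\cite{Labbe:ExpCombSegments} as a black box, so there is no in-paper argument to compare against and your proposal has to stand on its own. It does not.

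Your preliminary observations are correct and worth keeping: in a pure normal pseudomanifold without boundary the link of a vertex $v$ is again pure, normal, and a pseudomanifold without boundary, of dimension one less with at most $n-1$ vertices, and the dual graph of the star of $v$ is isomorphic to that of the link because any shared ridge of two facets of the star must contain $v$. This correctly gives the inductive bound $2^{d-4}(n-1)$ for traversal within a single star. But the inductive step is only an intent, not an argument. If a geodesic decomposes into as many as $n$ blocks, each traversed at cost up to $2^{d-4}(n-1)$, the total is quadratic in $n$, not $2\cdot 2^{d-4}n$; nothing in the proposal explains how the charging produces a factor of $2$ rather than a factor of $n$. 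Your fallback characterization --- that a naive decomposition yields the Kalai--Kleitman recursion $\Sigma(d,n)\leq \Sigma(d-1,n-1)+2\Sigma(d,\lfloor n/2\rfloor)+2$ and hence a bound near $3\cdot 2^{d-4}n$ --- is also off: that recursion, seeded with a linear base case, unrolls quasipolynomially, not linearly (already $O(n\log n)$ at $d=4$), so it is not even of the right shape. The missing ingredient is a structural lemma about geodesics: in Larman's argument, which Labb\'e--Manneville--Santos adapt to the simplicial setting, one modifies the geodesic so that, for each vertex, the facets along the path containing that vertex form a contiguous interval, and the factor of two arises from an accounting of where those intervals open and close. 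You would need to formulate and prove such a lemma; hoping that a ``canonical'' block decomposition exists is not a substitute. Finally, the base case $d=3$ is asserted but not established: the claim that a closed triangulated surface with $n$ vertices has dual diameter at most $n$ is itself a nontrivial theorem, since the number of triangles is $2n-4+4g$ and grows with the genus $g$, and spot-checking the tetrahedron, octahedron, icosahedron, and a minimal torus does not discharge it.
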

Let $\Sigma(d,n)$ denote the maximum dual diameter of pure $(d-1)$-dimensional normal simplicial complexes which are pseudomanifolds without boundary. We recursively define a doubly-indexed sequence $\tilde\Sigma(d,n)$ for $n \geq d \geq 2$ by the following:
\begin{equation*}
\tilde\Sigma(d,n) = \left\{
\begin{array}{ll}
\lfloor\frac{n}{2}\rfloor & \text{if } d=2\\
0 &\text{if } n = d > 2\\
\tilde\Sigma(d-1,n-1) &\text{if } 2 < d \leq n < 2d\\
\tilde\Sigma(d-1,n-1)+2 \tilde\Sigma(d,\lfloor n/2\rfloor)+2  &\text{if } 4 < 2d \leq n.
\end{array}
\right.
\end{equation*}
Since $\Sigma(2,n) \leq \lfloor\frac{n}{2}\rfloor$, the normal property implies $\Sigma(d,n) \leq \tilde\Sigma(d,n)$ for $n \geq d \geq 2$. 
We use the technique of Sukegawa and Kitahara (see~\cite{Sukegawa:KalaiKleitman}) to prove the following.
\begin{theorem}\label{theorem:Sigma}
If $n > d \geq 4$, then $\Sigma(d,n) \leq (n-d)^{\log (d)}$.
\end{theorem}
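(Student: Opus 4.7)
The plan is to mirror the proof of Theorem~\ref{theorem:DeltaU-k0}, with the fractions shifted by one since here the target exponent is $\log d$ rather than $\log(d-1)$. Using the recursion defining $\tilde\Sigma$ together with $\Sigma(d,n) \leq \tilde\Sigma(d,n)$, it suffices to show that $\tilde\Sigma(d,n) \leq (n-d)^{\log d}$ for $n > d \geq 4$. I would induct on $n$, with an outer induction on $d$ starting at the base case $d=4$.

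For the base case $d=4$, Theorem~\ref{theorem:complex-linear-fixed} gives $\Sigma(4,n) \leq 2^{4-3} n = 2n$. The inequality $2n \leq (n-4)^2 = (n-4)^{\log 4}$ is equivalent to $n^2 - 10n + 16 \geq 0$, which holds for $n \geq 8$. The remaining cases $5 \leq n \leq 7$ would be verified directly from the recurrence defining $\tilde\Sigma$, tabulated in an appendix parallel to Appendix~\ref{appendix:DeltaU-k0}.

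For the inductive step $d \geq 5$, cases with $n < 2d$ reduce by monotonicity (following from the $\tilde\Sigma$ recurrence) to the case $n \geq 2d$. An appendix would also confirm the finite block $2d \leq n < d+8$, allowing the main calculation to assume $n - d \geq 8$, equivalently $\log(n-d) \geq 3$. Invoking the Kalai-Kleitman recurrence together with the two induction hypotheses $\Sigma(d-1,n-1) \leq (n-d)^{\log(d-1)}$ and $\Sigma(d,\lfloor n/2 \rfloor) \leq (n/2 - d)^{\log d}$, then using the identity $x^{\log y} = y^{\log x}$ and bounding $n/2 - d \leq (n-d)/2$, I would obtain
\begin{equation*}
\Sigma(d,n) \leq d^{\log(n-d)} \left[\left(\frac{d-1}{d}\right)^{\log(n-d)} + \frac{2}{d} + \frac{2}{d^{\log(n-d)}}\right].
\end{equation*}

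The main obstacle, as in Theorems~\ref{theorem:DeltaU-k0} and~\ref{theorem:DeltaB-k0}, is showing the bracketed term is at most $1$. Using $\log(n-d) \geq 3$, it suffices to verify $\bigl(\frac{d-1}{d}\bigr)^{3} + \frac{2}{d} + \frac{2}{d^{3}} \leq 1$, which after multiplying through by $d^{3}$ becomes $d^{2} - 3d - 1 \geq 0$, an inequality satisfied for all integers $d \geq 4$ and hence throughout our range $d \geq 5$. I expect no further difficulty beyond careful bookkeeping in the chain of inequalities and populating the appendix with the small-case values of $\tilde\Sigma(d,n)$.
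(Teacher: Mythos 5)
Your proof is correct and mirrors the paper's argument step for step: the same chain of inequalities, the same splitting off of $\left(\tfrac{d-1}{d}\right)^{\log(n-d)}$, the same use of $\log(n-d)\geq 3$, and the same final reduction to $d^2-3d-1\geq 0$ (equivalently the paper's $d^3-d^2+3d+1\leq d^3$). The only cosmetic difference is your $d=4$ base case, where you correctly invoke the $2^{d-3}n$ bound from Theorem~\ref{theorem:complex-linear-fixed} (giving threshold $n\geq 8$ and a shorter appendix tabulation), whereas the paper writes $2^{4-1}n$ and tabulates up to $n=15$.
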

\begin{proof}
If $d=4$, then Appendix~\ref{appendix:Sigma} shows $\tilde\Sigma(4,n) \leq (n-4)^{\log(4)}$ for $4 < n \leq 15$ and Theorem~\ref{theorem:complex-linear-fixed} implies $\Sigma(4,n) \leq 2^{4-1}n \leq (n-4)^{\log(4)}$ for $n \geq 16$. Suppose $d \geq 5$. The cases when $n < 2d$ follow from monotonicity, so we assume $n \geq 2d$. Appendix~\ref{appendix:Sigma} also checks $\tilde\Sigma(d,n) \leq (n-d)^{\log (d)}$ holds in the cases when $2d \leq n < d+8$. Thus, we assume $d \geq 5$ and $n-d\geq 8$, which implies $\log(n-d) \geq 3$. Then,
\begin{align*}
\Sigma(d,n) &\leq \Sigma(d-1,n-1) + 2 \cdot \Sigma(d,\lfloor n/2 \rfloor) + 2 \\
&\leq \textstyle (n-d)^{\log(d-1)} + 2 \cdot (\frac{n}{2} - d)^{\log (d)} + 2\\
&= (d-1)^{\log(n-d)} + 2  \cdot d^{\log(n/2-d)} + 2\\
&=  \left(\frac{d-1}{d}\right)^{\log(n-d)} d^{\log(n-d)} + 2 \cdot d^{\log(n/2-d)} + 2\\
&\leq \left(\frac{d-1}{d}\right)^{\log(n-d)} d^{\log(n-d)} + 2 \cdot d^{\log((n-d)/2)} + 2 \\
&= \left(\frac{d-1}{d}\right)^{\log(n-d)} d^{\log(n-d)} + \frac2{d} \cdot d^{\log(n-d)} + 2\\
&\leq \left(\frac{d-1}{d}\right)^{3} d^{\log(n-d)} + \frac2{d} \cdot d^{\log(n-d)} + 2\\
&= d^{\log(n-d)} \left[ \left(\frac{d-1}{d}\right)^{3}  + \frac2{d}   + \frac{2}{d^{\log(n-d)}} \right]\\
&\leq d^{\log(n-d)} \left[ \frac{d^3-3d^2+3d-1}{d^3}  + \frac{2d^2}{d^3}   + \frac{2}{d^3} \right]\\
&= d^{\log(n-d)} \cdot \frac{d^3-d^2+3d+1}{d^3}\\
&\leq (n-d)^{\log (d)},
\end{align*}
since $d^3-d^2+3d+1 \leq d^3$ for $d \geq 4$.
\end{proof}

\section{Iterating the Kalai-Kleitman inequality}\label{section:iterated-recursion}

In this section, we prove several intermediate bounds on the diameters of polyhedra which will be needed in Section~\ref{section:tail-polynomial}. In addition to the inequality~\eqref{equation:KalaiKleitman-Deltau} due to Kalai and Kleitman, we will need two other previously-known inequalities.

The first inequality is trivially derived from Proposition 2.9a in~\cite{Klee:d-step}, which states if $n > d > 1$, then $\Delta_u(d,n) \leq \Delta_u(d,n+1)-1$. By reindexing, $\Delta_u(d-1,n-1) \leq \Delta_u(d-1,n)-1$ if $n>d>2$. The inequality also holds in the case $n=d$, because the $(d-1)$-polyhedron $P_{d-1}=\{x \in \mathbb{R}^{d-1}_{\geq 0} \mid \sum_{j=1}^{d-1} x_j \geq 1\}$ with $d$ facets has diameter one. Thus,
\begin{equation}\label{equation:remove-facet}
\Delta_u(d-1,n-1) \leq \Delta_u(d-1,n)-1 \text{ if } n \geq d \geq 3.
\end{equation}

Our second inequality is even more trivial. In the case when $n<2d$, the recursive bound 
\begin{equation}\label{equation:small-facets}
\Delta_u(d,n) \leq \Delta_u(d-1,n-1)
\end{equation}
holds by the observation that any two vertices lie on a common facet. As in~\cite{Ziegler:Lectures}, one may iteratively apply inequality~\eqref{equation:small-facets} until the number of facets is no longer less than twice the dimension. If $n \geq 2d$, one must turn to inequality~\eqref{equation:KalaiKleitman-Deltau}, whose recursive formula is not known. In the following lemma, we analyze the analogous iterative process for $n \geq 2d$. 
\begin{lemma}\label{lemma:iterated-KalaiKleitman}
For $d \geq 3$, $k \geq 0$ integer, and $d\cdot 2^k \leq n < d \cdot 2^{k+1}$,
\begin{equation}\label{equation:iterated-KK-inequality}
\Delta_u(d,n) \leq \sum_{i=0}^k 2^i\Delta_u(d-1,\lfloor  n/{2^i}\rfloor)-1.
\end{equation}
\end{lemma}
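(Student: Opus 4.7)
The plan is induction on $k$, using inequality \eqref{equation:small-facets} and inequality \eqref{equation:remove-facet} for the base case and the Kalai-Kleitman recursion \eqref{equation:KalaiKleitman-Deltau} for the inductive step. The shape of the right-hand side (a geometric sum in $i$ with the single trailing $-1$) suggests that the $+2$ constant in the Kalai-Kleitman recursion will be swallowed by the $-1$'s coming from the inductive hypothesis and from \eqref{equation:remove-facet}, rather than accumulating.

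For the base case $k=0$, the hypothesis reduces to $d \leq n < 2d$. In that range inequality \eqref{equation:small-facets} gives $\Delta_u(d,n) \leq \Delta_u(d-1,n-1)$, and then inequality \eqref{equation:remove-facet} gives $\Delta_u(d-1,n-1) \leq \Delta_u(d-1,n) - 1$, which is exactly the claim at $k=0$.

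For the inductive step, assume the result for $k-1$ and suppose $d\cdot 2^k \leq n < d\cdot 2^{k+1}$. Since $k\geq 1$ we have $n \geq 2d$, so \eqref{equation:KalaiKleitman-Deltau} applies:
\[
\Delta_u(d,n) \leq \Delta_u(d-1,n-1) + 2\Delta_u(d,\lfloor n/2\rfloor) + 2.
\]
The halved quantity satisfies $d\cdot 2^{k-1} \leq \lfloor n/2\rfloor < d\cdot 2^k$, so the inductive hypothesis yields
\[
\Delta_u(d,\lfloor n/2\rfloor) \leq \sum_{i=0}^{k-1} 2^i \Delta_u\bigl(d-1, \lfloor \lfloor n/2\rfloor / 2^i \rfloor\bigr) - 1.
\]
Using the standard identity $\lfloor \lfloor n/2\rfloor/2^i\rfloor = \lfloor n/2^{i+1}\rfloor$ and multiplying by $2$, the double-$\lfloor\cdot\rfloor$ collapses and the shift $i \mapsto i+1$ turns the sum into $\sum_{j=1}^{k} 2^j \Delta_u(d-1,\lfloor n/2^j\rfloor) - 2$. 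The $-2$ exactly cancels the constant $+2$ from the Kalai-Kleitman step, leaving
\[
\Delta_u(d,n) \leq \Delta_u(d-1,n-1) + \sum_{j=1}^{k} 2^j \Delta_u(d-1,\lfloor n/2^j\rfloor).
\]
Finally, one application of \eqref{equation:remove-facet} replaces $\Delta_u(d-1,n-1)$ with $\Delta_u(d-1,n)-1$, producing the $j=0$ term of the desired sum together with the trailing $-1$.

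The only real pitfall is bookkeeping: making sure the index-shift in the inductive hypothesis produces the term $j=k$ (rather than missing the top or bottom of the sum) and that the $+2$/$-2$/$-1$ additive constants combine as claimed. The nested-floor identity and the verification that $\lfloor n/2\rfloor$ lands in the inductive range $[d\cdot 2^{k-1}, d\cdot 2^k)$ are the only arithmetic details, and both are immediate from $d\cdot 2^k \leq n < d\cdot 2^{k+1}$.
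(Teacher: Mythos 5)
Your proof is correct and matches the paper's argument essentially step for step: induction on $k$, with the base case $k=0$ handled by combining inequalities \eqref{equation:small-facets} and \eqref{equation:remove-facet}, and the inductive step handled by applying \eqref{equation:KalaiKleitman-Deltau}, invoking the inductive hypothesis on $\lfloor n/2\rfloor$ (using the nested-floor identity, which the paper uses implicitly), reindexing, and absorbing the $+2$ with the $-2$ and $-1$. The only cosmetic difference is that you index the inductive step as $k-1 \to k$ while the paper writes it as $k \to k+1$.
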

\begin{proof}
Let $d \geq 3$ be fixed. We proceed by induction on $k$.  If $k=0$, then $d \leq n < 2d$, so by~\eqref{equation:remove-facet} and~\eqref{equation:small-facets}, $\Delta_u(d,n) \leq \Delta_u(d-1,n-1) \leq \Delta_u(d-1,n)-1$, which is the right side of~\eqref{equation:iterated-KK-inequality}. Assume that~\eqref{equation:iterated-KK-inequality} holds for $k \geq 0$. Let $d\cdot 2^{k+1}\leq n < d \cdot 2^{k+2}$. Then $d \cdot 2^k \leq \lfloor n/2 \rfloor < d \cdot 2^{k+1}$. By our hypothesis and~\eqref{equation:KalaiKleitman-Deltau},
\begin{align*}
\Delta_u(d,n) &\leq \Delta_u(d-1,n-1)+2\Delta_u(d,\lfloor n/2\rfloor)+2\\
&\leq \Delta_u(d-1,n-1)+2\left[\sum_{i=0}^k 2^i \Delta_u(d-1,\lfloor  n/{2^{i+1}}\rfloor)-1\right]+2 \\
&\leq \Delta_u(d-1,n)-1+\sum_{i=0}^k 2^{i+1}\Delta_u(d-1,\lfloor  n/{2^{i+1}}\rfloor)-2+2 \\
&= \Delta_u(d-1,n)+\sum_{i=1}^{k+1} 2^i\Delta_u(d-1,\lfloor  n/{2^i}\rfloor)-1 \\
&= \sum_{i=0}^{k+1} 2^i\Delta_u(d-1,\lfloor  n/{2^i}\rfloor)-1.
\end{align*}
Therefore, the resulting bound \eqref{equation:iterated-KK-inequality} holds by induction.
\end{proof}
For $d=4$, this bound aligns with the upper bound $\tilde{\Delta}_u(4,n)$ mentioned in Section~\ref{section:polyhedra}. The Kalai-Kleitman recursion has had the following long-standing geometric interpretation: Given two arbitrary vertices $u$ and $v$ in the graph $G$ of a polyhedron $P$, there is a subgraph $G_u$ of $G$ incident to $\frac{n}{2}$ facets, a subgraph $G_v$ of $G$ incident to $\frac{n}{2}$ facets and a common facet $F$ containing vertices $u'$ and $v'$ incident to $G_u$ and $G_v$, respectively. The efficiency of the bound on $\Delta_u(d,n)$ in the previous lemma comes from applying the same inequality inductively for the paths from $u$ to $u'$ and from $v$ to $v'$.

\begin{theorem}\label{theorem:sum-binary-facets}
For $n \geq d \geq 3$,
\[\Delta_u(d,n) \leq \sum_{i=0}^{\lfloor \log n/d\rfloor} 2^i \Delta_u(d-1,\lfloor  n/2^i\rfloor)-1.\]
\end{theorem}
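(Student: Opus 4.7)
The plan is to observe that Theorem~\ref{theorem:sum-binary-facets} is essentially a reformulation of Lemma~\ref{lemma:iterated-KalaiKleitman} after identifying the correct value of $k$. The lemma was stated for $n$ lying in a specific dyadic interval $d\cdot 2^k \leq n < d\cdot 2^{k+1}$; the theorem simply asserts the same bound for every $n \geq d$ by noting that these intervals partition the range $[d,\infty)$ and that the summation index in the conclusion is precisely $k=\lfloor\log(n/d)\rfloor$.

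Concretely, I would proceed as follows. Given $n \geq d \geq 3$, define $k := \lfloor \log(n/d)\rfloor$. Since $n \geq d$, we have $n/d \geq 1$ and thus $k \geq 0$ is a nonnegative integer. By definition of the floor, $k \leq \log(n/d) < k+1$, which is equivalent (after exponentiating) to $2^k \leq n/d < 2^{k+1}$, i.e.\ $d\cdot 2^k \leq n < d\cdot 2^{k+1}$. Therefore the hypothesis of Lemma~\ref{lemma:iterated-KalaiKleitman} is satisfied for this $d$ and $k$, yielding
\[
\Delta_u(d,n) \leq \sum_{i=0}^{k} 2^i\,\Delta_u(d-1,\lfloor n/2^i\rfloor) - 1,
\]
which is exactly the claimed inequality upon substituting $k = \lfloor\log(n/d)\rfloor$.

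There is no genuine obstacle; the only thing to watch is making sure the characterization of $k$ is stated so that readers see that every $n\geq d$ falls into exactly one dyadic band $[d\cdot 2^k, d\cdot 2^{k+1})$. One could also remark that in the edge case $n=d$ we get $k=0$ and the bound reduces to $\Delta_u(d,d) \leq \Delta_u(d-1,d) - 1$, which is consistent with the base case already handled in the proof of Lemma~\ref{lemma:iterated-KalaiKleitman} via inequalities~\eqref{equation:remove-facet} and~\eqref{equation:small-facets}. So the entire argument is a one-line corollary plus the identification $k=\lfloor\log(n/d)\rfloor$.
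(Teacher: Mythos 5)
Your proposal is correct and matches the paper's approach: the paper's proof is the one-line observation that $d\cdot 2^k \leq n$ forces $k \leq \lfloor \log(n/d)\rfloor$, and you have simply spelled out that in fact $k = \lfloor\log(n/d)\rfloor$ exactly, which immediately rewrites Lemma~\ref{lemma:iterated-KalaiKleitman} as the stated theorem.
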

\begin{proof}
Note that $d \cdot 2^k \leq n$ implies $k \leq \lfloor\log n/d \rfloor$.
\end{proof}

\begin{lemma}\label{lemma:index-swap}
For $k \geq 0$,
\[
\sum_{i_1=0}^k\sum_{i_2=0}^{k-i_1}\cdots \sum_{i_p=0}^{k-i_1-\dots-i_{p-1}}1
=
\sum_{i_1=0}^k\sum_{i_2=0}^{i_1}\cdots \sum_{i_p=0}^{i_{p-1}} 1
=
\binom{k+p}k
\]
\end{lemma}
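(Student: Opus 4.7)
The plan is to recognize each of the two sums as a standard counting problem and verify that both evaluate to $\binom{k+p}{k}$. Reading off the summation bounds, I would first observe that the leftmost expression counts the nonnegative integer tuples $(i_1, \ldots, i_p)$ satisfying $i_1 + i_2 + \cdots + i_p \leq k$, while the middle expression counts the weakly decreasing tuples $k \geq i_1 \geq i_2 \geq \cdots \geq i_p \geq 0$ with entries in $\{0, 1, \ldots, k\}$.

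For the first count, I would introduce the slack variable $i_{p+1} := k - (i_1 + \cdots + i_p) \geq 0$ to produce a bijection with $(p+1)$-tuples of nonnegative integers summing exactly to $k$; by stars and bars this number is $\binom{k+p}{p} = \binom{k+p}{k}$. For the second count, weakly decreasing $p$-tuples with entries in $\{0, \ldots, k\}$ are in bijection with size-$p$ multisets of a $(k+1)$-element set, of which there are $\binom{(k+1)+p-1}{p} = \binom{k+p}{k}$.

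Should an entirely direct argument between the two sums be preferred, I would exhibit the partial-sum bijection $(i_1, \ldots, i_p) \mapsto (j_1, \ldots, j_p)$ with $j_\ell = i_1 + i_2 + \cdots + i_\ell$ and then reverse indices, which sends tuples with $i_1 + \cdots + i_p \leq k$ to weakly decreasing tuples in $\{0, \ldots, k\}$. As an alternative, either identity follows by induction on $p$ using the hockey stick relation $\sum_{j=0}^{k} \binom{j+p-1}{j} = \binom{k+p}{k}$, reducing the $p$-fold sum to a $(p-1)$-fold sum after reindexing the outermost variable. Since these are textbook combinatorial identities, there is no genuine obstacle; the only care needed is matching the middle sum's decreasing-style bounds against the multiset interpretation and verifying the base case $p=1$, where both sums trivially give $k+1 = \binom{k+1}{k}$.
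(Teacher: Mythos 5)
Your proof is correct but takes a genuinely different route from the paper's. The paper establishes the first equality by induction on $p$ (with a base case handled by induction on $k$ or by ``expanding and regrouping'') and then outsources the second equality to a citation. You instead interpret both nested sums combinatorially and evaluate each directly: the left-hand sum counts nonnegative integer $p$-tuples with $i_1+\cdots+i_p\le k$, handled by introducing a slack variable and applying stars and bars; the middle sum counts weakly decreasing $p$-tuples with entries in $\{0,\dots,k\}$, handled by the multiset count $\binom{(k+1)+p-1}{p}$. Both give $\binom{k+p}{k}$, so the first equality falls out for free; the explicit partial-sum bijection you sketch (followed by reversing indices) confirms it without any appeal to the binomial formula at all. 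What this buys is a self-contained, one-shot argument that avoids both the double induction and the external reference, and it makes transparent why the answer is a binomial coefficient. The paper's approach is terser on the page but less illuminating about what the nested sums are counting. Both are sound; yours is arguably the more instructive proof, and your noted base case $p=1$ giving $k+1=\binom{k+1}{k}$ is the right sanity check.
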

\begin{proof}The first equality follows from induction on $p$ and the base case
\[
\sum_{i_1=0}^k\sum_{i_2=0}^{k-i_1}1=\sum_{i_1=0}^k\sum_{i_2=0}^{i_1} 1,
\]
which in turn can be shown by induction on $k$, or by expanding and regrouping. The second equality is found in~\cite{butler}.
\end{proof}

The following theorem is a refinement of the statement $\Delta_u(d,2^k)\leq 2^k \binom{k+d}{d}=2^k\binom{k+d}{k}$ from~\cite{KimSantos:HirschSurveyCompanion}.
\begin{theorem}\label{theorem:binomial-bound}
For $n \geq d \geq 3$,
\[
\Delta_u(d,n) \leq (n-3)\binom{\lfloor \log \frac n{4}\rfloor+d-3}{\lfloor \log \frac n{4}\rfloor}.
\]
\end{theorem}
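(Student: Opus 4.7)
The plan is to iterate Theorem~\ref{theorem:sum-binary-facets} exactly $d-3$ times, reducing the dimension by one at each step until we can invoke Klee's bound $\Delta_u(3,m)\leq m-3$. Set $k:=\lfloor \log(n/4)\rfloor$. One application gives $\Delta_u(d,n) \leq \sum_{i_1=0}^{\lfloor \log(n/d)\rfloor} 2^{i_1}\Delta_u(d-1,\lfloor n/2^{i_1}\rfloor) - 1$; iterating on each $\Delta_u(d-1,\cdot)$ summand (and discarding the $-1$ correction terms, which only sharpen the bound) produces the nested sum
\[
\Delta_u(d,n) \;\leq\; \sum_{i_1}\sum_{i_2}\cdots\sum_{i_{d-3}} 2^{i_1+\cdots+i_{d-3}}\,\Delta_u\!\left(3,\lfloor n/2^{i_1+\cdots+i_{d-3}}\rfloor\right).
\]

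The summation limits require some care. At step $\ell$, we apply the theorem at dimension $d_\ell = d-\ell+1$ to the current facet count $n_{\ell-1} = \lfloor n/2^{i_1+\cdots+i_{\ell-1}}\rfloor$, so the upper limit on $i_\ell$ is $\lfloor \log(n_{\ell-1}/d_\ell)\rfloor$. Since $d_\ell \geq 4$ throughout (as $\ell \leq d-3$) and $n_{\ell-1} \leq n/2^{i_1+\cdots+i_{\ell-1}}$, this upper limit is at most $k - (i_1+\cdots+i_{\ell-1})$, and I would enlarge the range to this bound (which is harmless since each additional summand is nonnegative). The applicability hypothesis $n_{\ell-1} \geq d_\ell$ of Theorem~\ref{theorem:sum-binary-facets} is preserved inductively: the previous step's upper limit on $i_{\ell-1}$ forces $2^{i_{\ell-1}} \leq n_{\ell-2}/d_{\ell-1}$, hence $n_{\ell-1} \geq d_{\ell-1} > d_\ell$.

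Each summand is then bounded using $\Delta_u(3,\cdot)\leq (\cdot) - 3$: writing $s := i_1+\cdots+i_{d-3}$, we have $s \leq k$, so $\lfloor n/2^s\rfloor \geq 4$ and
\[
2^{s}\,\Delta_u\!\left(3,\lfloor n/2^{s}\rfloor\right) \;\leq\; 2^{s}\bigl(\lfloor n/2^{s}\rfloor - 3\bigr) \;\leq\; n - 3\cdot 2^{s} \;\leq\; n-3.
\]
Finally, Lemma~\ref{lemma:index-swap} with $p = d-3$ counts the tuples $(i_1,\ldots,i_{d-3})$ in our range as $\binom{k+d-3}{k}$, yielding the claimed bound $(n-3)\binom{k+d-3}{k}$. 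The main obstacle is purely bookkeeping: tracking how each step's upper limit depends on the preceding indices and confirming that the hypothesis $n_{\ell-1}\geq d_\ell$ of Theorem~\ref{theorem:sum-binary-facets} is maintained throughout the iteration so that the recursion can be unrolled all the way down to dimension~$3$.
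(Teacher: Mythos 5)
Your proof is correct and follows essentially the same route as the paper's: iterate Theorem~\ref{theorem:sum-binary-facets} exactly $d-3$ times down to dimension~$3$, drop the $-1$ corrections, relax the summation limits to $k-(i_1+\cdots+i_{\ell-1})$, invoke Klee's bound $\Delta_u(3,m)\leq m-3$, bound each summand by $n-3$, and count the index tuples with Lemma~\ref{lemma:index-swap}. Your explicit verification that the applicability hypothesis $n_{\ell-1}\geq d_\ell$ persists at every unrolling step is a small bit of bookkeeping that the paper leaves implicit, but it is not a different argument.
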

\begin{proof} Let $d \geq 4$. Our aim is to recursively apply Theorem~\ref{theorem:sum-binary-facets} until the first argument of $\Delta_u$ inside the nested summations is equal to 3, at which point we apply $\Delta_u(3,n)\leq n-3$ from~\cite{Klee:PathsII}. We first apply Theorem~\ref{theorem:sum-binary-facets} twice to illustrate how we intend to simplify:
\begin{align}
\nonumber \Delta_u(d,n) &\leq \sum_{i_1=0}^{\lfloor \log \frac nd\rfloor} 2^{i_1} \Delta_u\left(d-1,\left\lfloor  \frac n{2^{i_1}}\right\rfloor\right)-1\\
\nonumber &\leq \sum_{i_1=0}^{\lfloor \log \frac nd\rfloor} 2^{i_1} \Delta_u\left(d-1,\left\lfloor  \frac n{2^{i_1}}\right\rfloor\right)\\
\nonumber &\leq \sum_{i_1=0}^{\lfloor \log \frac nd\rfloor} 2^{i_1} \left[\sum_{i_2=0}^{\left\lfloor \log \frac{\lfloor n /2^{i_1}\rfloor}{d-1}\right\rfloor} 2^{i_2}\Delta_u\left(d-2,\left\lfloor \frac n{2^{i_1+i_2}}\right\rfloor\right)-1 \right]\\
\nonumber &\leq \sum_{i_1=0}^{\lfloor \log \frac nd\rfloor} \sum_{i_2=0}^{\left\lfloor \log \frac{n}{d-1}\right\rfloor-i_1} 2^{i_1+i_2}\Delta_u\left(d-2,\left\lfloor \frac n{2^{i_1+i_2}}\right\rfloor\right).
\end{align}
Continuing this recursion until we have applied Theorem \ref{theorem:sum-binary-facets} exactly $d-3$ times, we obtain
\begin{align*}
\Delta_u(d,n) &\leq\sum_{i_1=0}^{\lfloor \log \frac nd\rfloor} \sum_{i_2=0}^{\lfloor \log \frac n{d-1}\rfloor-i_1}\cdots \sum_{i_{d-3}=0}^{\lfloor \log \frac n{4}\rfloor-i_1-i_2-\dots-i_{d-4}} 2^{i_1+i_2+\dots+i_{d-3}}\Delta_u\left(3,\left\lfloor  \frac n{2^{i_1+i_2+\dots+i_{d-3}}}\right\rfloor\right).
\intertext{Applying the bound $\Delta_u(3,n)\leq n-3$ from~\cite{Klee:PathsII}, the above expression is bounded by}
&\leq \sum_{i_1=0}^{\lfloor \log \frac nd\rfloor} \sum_{i_2=0}^{\lfloor \log \frac n{d-1}\rfloor-i_1}\cdots \sum_{i_{d-3}=0}^{\lfloor \log \frac n{4}\rfloor-i_1-i_2-\dots-i_{d-4}} 2^{i_1+i_2+\dots+i_{d-3}} \left(\left\lfloor  \frac n{2^{i_1+i_2+\dots+i_{d-3}}}\right\rfloor-3\right)\\
&\leq \sum_{i_1=0}^{\lfloor \log \frac nd\rfloor} \sum_{i_2=0}^{\lfloor \log \frac n{d-1}\rfloor-i_1}\cdots \sum_{i_{d-3}=0}^{\lfloor \log \frac n{4}\rfloor-i_1-i_2-\dots-i_{d-4}} (n-3\cdot2^{i_1+i_2+\dots+i_{d-3}})\\
&\leq \sum_{i_1=0}^{\lfloor \log \frac nd\rfloor} \sum_{i_2=0}^{\lfloor \log \frac n{d-1}\rfloor-i_1}\cdots \sum_{i_{d-3}=0}^{\lfloor \log \frac n{4}\rfloor-i_1-i_2-\dots-i_{d-4}} (n-3)\\
&=(n-3) \sum_{i_1=0}^{\lfloor \log \frac nd\rfloor} \sum_{i_2=0}^{\lfloor \log \frac n{d-1}\rfloor-i_1}\cdots \sum_{i_{d-3}=0}^{\lfloor \log \frac n{4}\rfloor-i_1-i_2-\dots-i_{d-4}} 1\\
&\leq (n-3)\sum_{i_1=0}^{\lfloor \log \frac n{4}\rfloor}\sum_{i_2=0}^{\lfloor \log \frac n{4}\rfloor-i_1}\cdots\sum_{i_{d-3}=0}^{\lfloor \log \frac n{4}\rfloor-i_1-\dots-i_{d-3}}1\\
&= (n-3)\sum_{i_1=0}^{\lfloor \log \frac n{4}\rfloor}\sum_{i_2=0}^{i_1}\cdots\sum_{i_{d-3}=0}^{i_{d-3}}1\\
&= (n-3)\binom{\lfloor \log \frac n{4}\rfloor+d-3}{\lfloor \log \frac n{4}\rfloor},
\end{align*}
where the last two equalities use Lemma~\ref{lemma:index-swap}. 
\end{proof}
A similar bound of
\[
\Delta_b(d,n) \leq (\lfloor \tfrac{2n}3\rfloor-1)\binom{\lfloor \log \frac n{4}\rfloor+d-3}{\lfloor \log \frac n{4}\rfloor}
\]
holds for polytopes. We note that similar binomial bounds hold for subset partition graphs with dimension reduction and for normal simplicial complexes with the pseudomanifold property.

\section{Tail-polynomial bounds for polyhedra}\label{section:tail-polynomial}

In this section, we present successively sharper tail-polynomial upper bounds on the diameters of convex polyhedra. Our first bound is cubic in the number $n$ of facets.
\begin{theorem}\label{theorem:cubic}
For $d \geq 3$ and $n \geq 2^{d-1}$,
\[
\Delta_u(d,n) \leq \frac1{16}\frac{n^3}{\sqrt{3\log n-5}}.
\]
\end{theorem}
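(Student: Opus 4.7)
The plan is to apply Theorem~\ref{theorem:binomial-bound} and then carefully bound the resulting binomial coefficient. Setting $k := \lfloor \log(n/4) \rfloor$, Theorem~\ref{theorem:binomial-bound} gives
\[
\Delta_u(d,n) \;\leq\; (n-3)\binom{k+d-3}{k}.
\]
The tail hypothesis $n \geq 2^{d-1}$ rearranges to $\log n \geq d-1$, hence $d-3 \leq \log(n/4)$; since $d-3$ is an integer, $d-3 \leq k$. Because $\binom{k+j}{k}$ is nondecreasing in $j$ on $\{0,1,\ldots,k\}$ (strictly so for $k \geq 1$), it follows that $\binom{k+d-3}{k} \leq \binom{2k}{k}$. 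I would then apply the elementary central-binomial estimate $\binom{2k}{k} \leq 4^k/\sqrt{3k+1}$, valid for all $k \geq 0$ and easily proved by induction on $k$.

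The remaining task is to show $(n-3)\cdot 4^k/\sqrt{3k+1} \leq n^3/(16\sqrt{3\log n - 5})$. The key is to track the fractional part of $\log(n/4)$: setting $f := \log(n/4) - k \in [0,1)$, one has the exact identities $4^k = n^2/(16 \cdot 4^f)$ and $3k+1 = 3\log n - 5 - 3f$. Using $n-3 \leq n$, the inequality reduces to
\[
16^f\bigl(3\log n - 5 - 3f\bigr) \;\geq\; 3\log n - 5,
\]
which, for $f > 0$ and with $s := 3\log n - 5$, is equivalent to $s \geq g(f) := 3f \cdot 16^f / (16^f - 1)$; at $f = 0$ the displayed inequality holds with equality.

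The crux is to show that $g$ is strictly increasing on $(0,1)$ with $\lim_{f \to 1^-} g(f) = 48/15$. Logarithmic differentiation yields $(\ln g)'(f) = 1/f - (\ln 16)/(16^f - 1)$, which is positive by the inequality $e^x > 1+x$ applied to $x = f \ln 16 > 0$. Hence $g(f) < 48/15$ on $[0,1)$, so $s \geq g(f)$ whenever $s \geq 48/15$, i.e., whenever $n \geq 7$. The remaining small cases $n \in \{4,5,6\}$ are forced by the tail condition to have $d = 3$, where Klee's bound $\Delta_u(3,n) \leq n-3 \leq 3$ directly meets the target, since $n^3/(16\sqrt{3\log n - 5}) \geq 4$ for these values.

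The main obstacle I expect is the delicate bookkeeping of the fractional part $f$. A naive substitution using $4^k \leq n^2/16$ together with $3k + 1 \geq 3\log n - 8$ separately loses the cancellation between these two errors and produces only a denominator $\sqrt{3\log n - 8}$, which is strictly weaker than the stated $\sqrt{3\log n - 5}$. The entire argument hinges on keeping the $4^f$ factor in $4^k$ and the $-3f$ term in $3k+1$ tied together until the final step.
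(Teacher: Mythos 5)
Your proposal is correct and follows the same overall strategy as the paper's proof: apply Theorem~\ref{theorem:binomial-bound}, bound the binomial coefficient by the central one using $d-3 \leq \lfloor \log(n/4)\rfloor$, invoke $\binom{2k}{k}\leq 4^k/\sqrt{3k+1}$, and then convert back to $n$. The one place you diverge is precisely the step where the paper is loosest. The paper passes in a single unannotated line from $4^{\lfloor \log(n/4)\rfloor}/\sqrt{3\lfloor\log(n/4)\rfloor+1}$ to $(n/4)^2/\sqrt{3\log n - 5}$; read as two separate substitutions this is invalid, since increasing $\lfloor\log(n/4)\rfloor$ to $\log(n/4)$ raises the numerator as desired but would need $\lfloor\log(n/4)\rfloor \geq \log(n/4)$ (false unless $n/4$ is a power of $2$) to shrink the denominator. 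Indeed the function $t\mapsto 4^t/\sqrt{3t+1}$ is not even monotone near $t=0$. You correctly identify that the two floor errors must be tracked jointly through the fractional part $f$, reduce the claim to $16^f(s-3f)\geq s$ with $s=3\log n-5$, and use $(\ln g)'(f) = 1/f - (\ln 16)/(16^f-1) > 0$ (via $e^x > 1+x$) to bound $g$ by its limit $48/15$ at $f\to 1^-$. This disposes of all $n\geq 7$, and the cases $n\in\{4,5,6\}$ are forced to have $d=3$ by the tail hypothesis and are checked by hand. The end result is the same theorem by essentially the same route, but your write-up supplies a genuine justification for a step the paper leaves opaque (and which, while true for all integers $n\geq 4$, does not follow from a termwise monotonicity argument). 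Your observation about the weaker $\sqrt{3\log n - 8}$ denominator under the naive separate substitution is also correct and explains exactly why the joint treatment is necessary.
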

\begin{proof} Let $d \geq 3$ and $n \geq 2^{d-1}$ so that $d-3 \leq \lfloor \log n/4 \rfloor$. By Theorem \ref{theorem:binomial-bound} and Stirling's approximation,
\begin{align*}
\Delta_u(d,n) 
&\leq (n-3)\binom{\lfloor \log \frac n{4}\rfloor+d-3}{\lfloor \log \frac n{4}\rfloor}\\
&\leq (n-3)\binom{2\lfloor \log \frac n{4}\rfloor}{\lfloor \log \frac n{4}\rfloor}\\
&\leq (n-3)\frac{4^{\lfloor \log \frac n{4}\rfloor}}{\sqrt{3\lfloor \log \frac n{4}\rfloor+1}}\\
&\leq (n-3)\frac{(n/4)^2}{\sqrt{3\log n-5}}\\[.5em]
&\leq \frac1{16}\frac{n^3}{\sqrt{3\log n-5}}.
\end{align*}
\end{proof}

This result improves to a tail-subcubic bound by using a direct upper bound on the binomial coefficient from Theorem~\ref{theorem:binomial-bound} instead of using the central binomial coefficient.
\begin{theorem}\label{theorem:subcubic}
Let $\varepsilon > 0$ and $d \geq 3$. For sufficiently large $n$,
\[
\Delta_u(d,n) \leq n^{1+\frac{1}{\ln2}+\varepsilon}.
\]
\end{theorem}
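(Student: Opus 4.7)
The plan is to return to Theorem~\ref{theorem:binomial-bound} and, instead of passing through the central binomial coefficient as in the proof of Theorem~\ref{theorem:cubic}, apply the more flexible estimate $\binom{m}{k}\le (em/k)^{k}$. Setting $k=\lfloor \log(n/4)\rfloor$ and $m=k+d-3$, this gives
\[
\binom{k+d-3}{k}\le \left(\frac{e(k+d-3)}{k}\right)^{k}=e^{k}\left(1+\frac{d-3}{k}\right)^{k}\le e^{k}\cdot e^{d-3},
\]
where the final step uses the standard inequality $(1+x)^{k}\le e^{kx}$ with $x=(d-3)/k$. The point of this split is to isolate all of the $d$-dependence into the single factor $e^{d-3}$.

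Next, because $e^{\log_2 x}=x^{1/\ln 2}$ for positive $x$ (a consequence of $\log_2 e = 1/\ln 2$, which is already used implicitly elsewhere in the paper), the factor $e^{k}$ satisfies $e^{k}\le (n/4)^{1/\ln 2}$. Substituting both estimates into Theorem~\ref{theorem:binomial-bound} yields
\[
\Delta_u(d,n)\le (n-3)\left(\frac{n}{4}\right)^{1/\ln 2} e^{d-3}\le C(d)\cdot n^{1+\frac{1}{\ln 2}},
\]
where $C(d):=4^{-1/\ln 2}\,e^{d-3}$ depends only on $d$.

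For any fixed $\varepsilon>0$ and any fixed $d\ge 3$, the constant $C(d)$ is eventually dominated by $n^{\varepsilon}$, so the bound $\Delta_u(d,n)\le n^{1+1/\ln 2+\varepsilon}$ holds for all sufficiently large $n$. There is no real obstacle here beyond two judgement calls. First, one should bound $\binom{k+d-3}{k}$ keeping $k$ in the exponent rather than the (smaller) $d-3$; the symmetric bound $\binom{k+d-3}{d-3}\le (e(k+d-3)/(d-3))^{d-3}$ would actually give a better $n(\log n)^{d-3}$ estimate for each fixed $d$, but not in the clean uniform form sought here. Second, the application of $(1+x)^{k}\le e^{kx}$ is essential for collapsing all of the $d$-dependence into a multiplicative constant, which the "sufficiently large $n$" clause then absorbs into $n^{\varepsilon}$.
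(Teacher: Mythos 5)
Your proof is correct and arrives at the same exponent, but takes a genuinely different route through the key estimate. Both you and the paper begin from Theorem~\ref{theorem:binomial-bound} and bound the binomial coefficient by $\binom{m}{k}\le(em/k)^{k}$; the split then diverges. The paper writes the ratio $(\lfloor\log n\rfloor+d-3)/\lfloor\log n\rfloor$ and imposes the explicit threshold $n\ge 2^{1+(d-3)/(2^{\varepsilon}-1)}$ so that $d-3\le(2^{\varepsilon}-1)\lfloor\log n\rfloor$, forcing the ratio below $2^{\varepsilon}$ and turning the whole binomial factor into $(2^{\varepsilon}e)^{\lfloor\log n\rfloor}\le n^{\varepsilon}n^{1/\ln 2}$. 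You instead factor $(1+(d-3)/k)^{k}$ out of $(e(k+d-3)/k)^{k}$ and crush it to $e^{d-3}$ via $(1+x)^{k}\le e^{kx}$, leaving $e^{k}\le(n/4)^{1/\ln 2}$ and a clean dimension-only constant $C(d)=4^{-1/\ln 2}e^{d-3}$ to be absorbed by $n^{\varepsilon}$ for large $n$. Your version isolates all $d$-dependence multiplicatively, which is conceptually tidier and closer in spirit to the lemma the paper proves next (the one showing $\binom{\lfloor\log n\rfloor+d-3}{\lfloor\log n\rfloor}\le n^{\varepsilon}$), but it gives up the explicit threshold $n\ge 2^{1+(d-3)/(2^{\varepsilon}-1)}$ that the paper's version supplies, hiding it behind ``sufficiently large $n$.'' One cosmetic caveat: the inequality $\binom{m}{k}\le(em/k)^{k}$ and the bound $(1+(d-3)/k)^{k}\le e^{d-3}$ both implicitly need $k=\lfloor\log(n/4)\rfloor\ge 1$, i.e.\ $n\ge 8$, which is harmless under the ``sufficiently large'' clause but worth stating. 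Your closing remark that the symmetric bound $\binom{k+d-3}{d-3}\le(e(k+d-3)/(d-3))^{d-3}$ gives a sharper $n(\log n)^{d-3}$ estimate for fixed $d$ is accurate and is essentially the observation that drives the paper's subsequent Lemma and Theorem~\ref{theorem:almost-linear}.
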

\begin{proof}
Let $d \geq 3$.
Suppose $n \geq 2^{1+\frac{d-3}{2^\varepsilon-1}}$. Then $2^\varepsilon > 0$ and $d-3 \leq (2^\varepsilon-1)\lfloor{\log n}\rfloor$. Using Theorem~\ref{theorem:binomial-bound},
\begin{align*}
\Delta_u(d,n)
&\leq (n-3)\binom{\lfloor \log \frac n{4}\rfloor+d-3}{\lfloor \log \frac n{4}\rfloor}\\
&\leq (n-3)\binom{\lfloor \log n\rfloor+d-3}{\lfloor \log n \rfloor} \\
&< n\left( \frac{e(\lfloor \log n \rfloor + d-3)}{\lfloor \log n \rfloor} \right)^{\lfloor \log n \rfloor} \\
&\leq n\left( \frac{e(\lfloor \log n \rfloor + (2^\varepsilon-1)\lfloor{\log n}\rfloor)}{\lfloor \log n \rfloor} \right)^{\lfloor \log n \rfloor} \\
&=n(2^\varepsilon e)^{\lfloor \log n \rfloor} \\
&\leq n2^{\varepsilon \log n} e^{\log n} \\
&=n^{1+\varepsilon} \cdot n^{\frac{1}{\ln 2}}.
\end{align*}
\end{proof}

If $n$ is much larger than $d$, then the binomial coefficient $\binom{\lfloor \log n \rfloor + d - 3}{\lfloor \log n \rfloor}$ is found near the boundary of Pascal's triangle. The following lemma shows that this binomial coefficient is small (relative to $n$) if $d-3$ is much smaller than $n$.
\begin{lemma}
Let $\varepsilon > 0$ and let $d \geq 3$ be given. For sufficiently large $n$,
\[
\binom{\lfloor \log n \rfloor + d - 3}{\lfloor \log n \rfloor} \leq n^\varepsilon.
\]
\end{lemma}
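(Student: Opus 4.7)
The plan is to treat $d$ as a fixed constant and exploit the fact that, with $d$ fixed, the binomial coefficient in question is a polynomial of degree $d-3$ in $\lfloor\log n\rfloor$, hence polylogarithmic in $n$, and polylogarithmic growth is dominated by every positive power of $n$.

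First, I would use the symmetry $\binom{k+d-3}{k} = \binom{k+d-3}{d-3}$ with $k=\lfloor\log n\rfloor$, so that the small index is the constant $d-3$ rather than the growing $k$. Next, applying the standard estimate $\binom{m}{r} \leq m^r/r!$ yields an upper bound of $(\lfloor\log n\rfloor+d-3)^{d-3}/(d-3)!$, which, once $n$ is large enough that $\lfloor\log n\rfloor\geq d-3$, is at most $\tfrac{2^{d-3}}{(d-3)!}(\log n)^{d-3}$. Thus the binomial coefficient is bounded by a constant depending only on $d$ times $(\log n)^{d-3}$.

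The final step is to invoke the elementary fact that $(\log n)^{d-3}=o(n^\varepsilon)$ for any fixed $\varepsilon>0$ and any fixed exponent $d-3$: this follows by taking logarithms, since $(d-3)\log\log n - \varepsilon\log n \to -\infty$ as $n\to\infty$. Hence for sufficiently large $n$ (depending on $d$ and $\varepsilon$), we obtain $\tfrac{2^{d-3}}{(d-3)!}(\log n)^{d-3}\leq n^\varepsilon$, completing the argument.

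I do not foresee any real obstacle here; the argument is a routine comparison of polylogarithmic and polynomial growth rates. The only mild care needed is making the threshold for \emph{sufficiently large} explicit in terms of $d$ and $\varepsilon$, but since the statement does not require an explicit threshold, this can be left implicit.
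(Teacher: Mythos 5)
Your proof is correct and noticeably more elementary than the paper's, but it takes a genuinely different route. You fix $d$, note that $\binom{\lfloor\log n\rfloor + d-3}{d-3}$ is a polynomial of degree $d-3$ in $\lfloor\log n\rfloor$, and conclude from the standard fact that polylogarithmic growth is $o(n^\varepsilon)$. The paper instead works with a carefully chosen auxiliary parameter $s=\max\{64/\varepsilon^2,e\}$, replaces the index $d-3$ by $\lceil \tfrac2s\lfloor\log n\rfloor\rceil$, and invokes $\binom{m}{r}<(em/r)^r$, all to obtain a clean, fully explicit threshold $n\geq 2^{32d/\varepsilon^2}$. That explicitness is not decorative: the downstream Theorem~\ref{theorem:almost-linear} is stated precisely as ``if $n\geq 2^{32d/\varepsilon^2}$'', so the paper's proof is engineered to produce exactly that form. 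Your argument establishes the lemma as stated (``for sufficiently large $n$'') and does so more transparently, but if you wanted to recover the paper's downstream theorem in its stated form you would need to make your ``sufficiently large'' explicit, and a bound of the shape $(d-3)\log\log n + C_d \leq \varepsilon\log n$ does not unwind to as simple a closed form as $2^{32d/\varepsilon^2}$. In short: your proof is shorter and cleaner for the lemma per se; the paper's buys a clean uniform threshold that it then reuses.
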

\begin{proof}
Fix $\varepsilon > 0$ and let $s = \max\{\frac{64}{\varepsilon^2},e\}$. Then
$\frac{\log s}{s} < \frac{\sqrt{s}}{s} = \frac1{\sqrt{s}} = \frac{\varepsilon}{8}$,
so
\[\frac{4\log (es)}{s} = \frac{4(\log e + \log s)}{s} \leq \frac{8\log s}{s} < \varepsilon.\]
So, $\frac{2\lfloor \log n \rfloor}{s} \log(es) < \frac{\varepsilon}{2} \lfloor \log n \rfloor$. Then $\log((es)^{2\lfloor \log n \rfloor/s}) < \frac{\varepsilon}{2} \lfloor \log n \rfloor \leq \log n^{\varepsilon/2}$. Therefore,
\begin{equation}\label{equation:n-epsilon-over-2}
(es)^{2\lfloor \log n \rfloor/s} \leq n^{\varepsilon/2}.
\end{equation}
Now suppose $n \geq 2^{32d/\varepsilon^2}$. Since $n \geq 2^{s/2}$, hence $\frac{2 \lfloor \log n \rfloor}{s} \geq 1$, so by~\eqref{equation:n-epsilon-over-2},
we have $es \leq n^{\varepsilon/2}$, which together with~\eqref{equation:n-epsilon-over-2} implies
\begin{equation}\label{equation:n-epsilon}
(es)^{2\lfloor \log n \rfloor/s+1} \leq n^\varepsilon.
\end{equation}
Since $n \geq 2^{32d/\varepsilon^2}$, we have $n \geq 2^{ds/2} \geq 2^{\frac{s}{2}(d-3)-1}$ so $\frac{s}{2}(d-3)-1 \leq \log n$, which implies $\frac{s}{2}(d-3) \leq \lfloor \log n \rfloor$, and thus $d-3 \leq \lceil \frac{2}{s}\lfloor \log n \rfloor \rceil$. Further, $d-3 \leq \lfloor \log n \rfloor$ since $s > 2$.
\begin{align*}
\binom{\lfloor \log n \rfloor + d - 3}{\lfloor \log n \rfloor}
&= \binom{\lfloor \log n \rfloor + d - 3}{d-3} \\
&\leq \binom{2\lfloor \log n \rfloor}{d-3} \\
&\leq \binom{2\lfloor \log n \rfloor}{\lceil \frac2s\lfloor \log n \rfloor \rceil} \\
&< \left(\frac{e \cdot 2\lfloor \log n \rfloor}{\lceil\frac2s\lfloor \log n \rfloor\rceil}\right)^{\lceil\frac2s\lfloor\log n\rfloor\rceil} \\
&\leq \left(\frac{e \cdot 2\lfloor \log n \rfloor}{\frac2s\lfloor \log n \rfloor}\right)^{1+\frac2s\lfloor\log n\rfloor} \\
&= (es)^{1+2\lfloor \log n \rfloor/s},
\end{align*}
where the first inequality follows from $d-3 \leq \lfloor \log n \rfloor$ and the second inequality comes from $d-3 \leq \lceil \frac{2}{s}\lfloor \log n \rfloor \rceil$. By~\eqref{equation:n-epsilon}, the last expression is bounded above by $n^\varepsilon$.  
\end{proof}

The previous lemma implies the following tail-almost-linear diameter upper bound for polyhedra.
\begin{theorem}\label{theorem:almost-linear}
For $\varepsilon > 0$ and $d \geq 3$, if $n \geq 2^{32d/\varepsilon^2}$, then
\[\Delta_u(d,n) \leq n^{1+\varepsilon}.\]
\end{theorem}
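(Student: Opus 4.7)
The plan is to combine Theorem~\ref{theorem:binomial-bound}, which gives a binomial upper bound on $\Delta_u(d,n)$, with the tail bound on the boundary-type binomial coefficient established in the preceding lemma. The hypothesis $n \geq 2^{32d/\varepsilon^2}$ is precisely what the proof of that lemma requires, so the theorem should fall out by chaining the two inequalities and using that $(n-3)\cdot n^\varepsilon \leq n^{1+\varepsilon}$.

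Concretely, I would proceed in three short steps. First, apply Theorem~\ref{theorem:binomial-bound} to get
\[
\Delta_u(d,n) \leq (n-3)\binom{\lfloor \log(n/4)\rfloor + d-3}{\lfloor \log(n/4)\rfloor}.
\]
Second, rewrite the binomial coefficient as $\binom{m+d-3}{d-3}$ with $m = \lfloor \log(n/4)\rfloor$; since this is a polynomial in $m$ with positive coefficients it is monotone nondecreasing in $m$, and $\lfloor \log(n/4)\rfloor \leq \lfloor \log n\rfloor$, so we may replace it by $\binom{\lfloor \log n\rfloor + d-3}{\lfloor \log n\rfloor}$. Third, invoke the preceding lemma, whose proof explicitly produces the threshold $n \geq 2^{32d/\varepsilon^2}$ under which the binomial coefficient is at most $n^\varepsilon$; then conclude $\Delta_u(d,n) \leq (n-3) n^\varepsilon \leq n^{1+\varepsilon}$.

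The main obstacle is essentially nil, because the hard analytic work has been done upstream: Theorem~\ref{theorem:binomial-bound} converts the Kalai--Kleitman recursion into a binomial bound, and the preceding lemma handles the delicate asymptotics of the binomial coefficient near the boundary of Pascal's triangle. The only bookkeeping points are the elementary monotonicity of $\binom{m+d-3}{d-3}$ in $m$, which lets one pass from $\lfloor\log(n/4)\rfloor$ to $\lfloor\log n\rfloor$, and matching the lemma's "sufficiently large $n$" with the explicit threshold $n \geq 2^{32d/\varepsilon^2}$ extracted from its proof. Neither point requires additional calculation.
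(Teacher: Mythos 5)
Your proposal is correct and matches the paper's own proof: both apply Theorem~\ref{theorem:binomial-bound}, pass from $\lfloor\log(n/4)\rfloor$ to $\lfloor\log n\rfloor$ (you make the easy monotonicity of $\binom{m+d-3}{d-3}$ in $m$ explicit, which the paper leaves tacit), and then invoke the preceding lemma with its threshold $n \geq 2^{32d/\varepsilon^2}$ to bound the binomial coefficient by $n^\varepsilon$. No gap and no genuine divergence of approach.
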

\begin{proof}
Fix $\varepsilon > 0$ and $d \geq 3$. Let $n \geq 2^{32d/\varepsilon^2}$. By the previous lemma and Theorem~\ref{theorem:binomial-bound},
\begin{align*}
\Delta_u(d,n)
&\leq (n-3)\binom{\lfloor \log \frac n{4}\rfloor+d-3}{\lfloor \log \frac n{4}\rfloor}\\
&\leq n\binom{\lfloor \log n\rfloor+d-3}{\lfloor \log n \rfloor} \\
&\leq n^{1+\varepsilon}.
\end{align*}
\end{proof}

\begin{corollary}\label{corollary:almost-linear}
For $\varepsilon > 0$, $d \geq 3$, and $n \geq 2^{32d/\varepsilon^2}$, one has 
$\Delta_b(d,n) \leq \Delta_u(d,n) \leq n^{1+\varepsilon}$.
\end{corollary}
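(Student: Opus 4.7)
The plan is straightforward: this corollary is essentially a repackaging of Theorem~\ref{theorem:almost-linear} together with the trivial inequality $\Delta_b(d,n) \leq \Delta_u(d,n)$ that was recorded at the start of Section~\ref{section:PreviousBounds}. All the substantive work has already been done in proving Theorem~\ref{theorem:almost-linear} via the iterated Kalai--Kleitman binomial bound (Theorem~\ref{theorem:binomial-bound}) and the preceding estimate on the binomial coefficient.

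First I would check that both $\Delta_b(d,n)$ and $\Delta_u(d,n)$ are defined under the given hypotheses. Since $\varepsilon > 0$ and $d \geq 3$, the hypothesis $n \geq 2^{32d/\varepsilon^2}$ certainly gives $n > d$, so both maximum diameters are well-defined. Next I would invoke the observation that every polytope is a polyhedron, so the maximum diameter over $d$-dimensional polytopes with $n$ facets cannot exceed the maximum diameter over $d$-dimensional polyhedra with $n$ facets; that is, $\Delta_b(d,n) \leq \Delta_u(d,n)$. Finally, I would apply Theorem~\ref{theorem:almost-linear} to bound $\Delta_u(d,n) \leq n^{1+\varepsilon}$, completing the chain
\[
\Delta_b(d,n) \leq \Delta_u(d,n) \leq n^{1+\varepsilon}.
\]

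There is no real obstacle: the whole point of stating this as a corollary is to record that the almost-linear tail bound transfers automatically from polyhedra to polytopes. If anything deserves a remark, it is only that the threshold $n \geq 2^{32d/\varepsilon^2}$ for $\Delta_b$ is likely not sharp, since $\Delta_b$ satisfies the same Kalai--Kleitman recursion~\eqref{equation:KalaiKleitman-Deltab} together with the stronger base case $\Delta_b(3,n) \leq \lfloor 2n/3\rfloor - 1$, and one could in principle redo the arguments of Section~\ref{section:iterated-recursion} with $\Delta_b$ in place of $\Delta_u$ throughout to obtain a slightly better constant.
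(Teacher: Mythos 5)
Your proposal is correct and is exactly the intended argument: combine the trivial inequality $\Delta_b(d,n)\leq\Delta_u(d,n)$ from Section~\ref{section:PreviousBounds} with the bound $\Delta_u(d,n)\leq n^{1+\varepsilon}$ from Theorem~\ref{theorem:almost-linear}. The paper leaves this corollary without a written proof precisely because it follows immediately in the way you describe.
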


\section{Concluding remarks}\label{section:concluding-remarks}

It is not likely that one can obtain better than tail-polynomial bounds for diameters of polyhedra using just the Kalai-Kleitman inequality. (See the discussion in~\cite{Ziegler:Lectures}).

We note that the results of Theorems~\ref{theorem:cubic}, \ref{theorem:subcubic}, and~\ref{theorem:almost-linear} are not proofs of the Polynomial Hirsch Conjecture, which asserts there is a polynomial $g(d,n)$ in $d$ and $n$ such that for all $n \geq d$, one has $\Delta_u(d,n) \leq g(d,n)$. However, we emphasize that these are tail results in the sense that for each $d$, the graph of the function $n \mapsto \Delta_u(d,n)$ is eventually below the graph of $g(n)=n^{1+\varepsilon}$, which gives very strong evidence for the Polynomial Hirsch Conjecture, and even the Linear Hirsch Conjecture, though an inequality much stronger than~\eqref{equation:remove-facet} is needed to apply extrapolation to obtain good bounds for $\Delta_u(d,n)$ when $n$ is small. Especially in light of Theorem~\ref{theorem:almost-linear}, it would be very surprising if the Polynomial Hirsch Conjecture were false.

The bounds in Theorems~\ref{theorem:cubic}, \ref{theorem:subcubic}, and~\ref{theorem:almost-linear} have expressions which are uniform in $d$, though the initial value for which the tail bound applies is exponential in $d$. These results may demonstrate the limitations of using bounds based on binomial coefficients. Perhaps a refinement of these arguments may lead to an affirmation of H\"ahnle's conjecture (see~\cite{Hahnle:Diplomathesis}) asserting $\Delta_u(n,d) \leq d(n-1)$. Given Theorem~\ref{theorem:almost-linear} and~\eqref{equation:remove-facet}, we conjecture the slightly weaker statement that $\Delta_u(n,d)$ is bounded above by $n^{2+\varepsilon}$.

Though linear programming can be solved in polynomial time using Kachiyan's ellipsoid method (see~\cite{Khachiyan}) and Karmarkar's interior point method (see~\cite{Karmarkar}), linear programming is not known to be strongly polynomial time in the sense of~\cite{BCSS}. Further details are found in~\cite{Grotschel:Geometric-Algorithms}, \cite{Megiddo:ComplexityLP}, and~\cite{Schrijver:CombinatorialOptimization-Polyhedra-Efficiency}. Even if the Polynomial Hirsch Conjecture is true, the bigger question remains the existence of a pivot rule so that the simplex algorithm for linear programming applies a polynomial number of simplex pivots. The existence of such a rule would admit a strongly polynomial time algorithm for linear programming, answering Smale's (see~\cite{Smale}) ninth problem.

\section*{Acknowledgments}

Research for both authors was partially supported by a Faculty Research Grant at the University of Wisconsin-La Crosse.

\newpage

\appendix

\section{Sporadic cases}

This appendix includes {\tt SAGE} code~\cite{sage} which verifies inequalities in sporadic cases.

\subsection{Sporadic cases for Theorem~\ref{theorem:DeltaU-k0}}
\label{appendix:DeltaU-k0}
 
The following {\tt SAGE} code verifies that $\tilde\Delta_u(d,n) \leq (n-d)^{\log (d-1)}$ for $d=4 \leq n \leq 45$, or $2d \leq n < d+8$.

\begin{verbatim}
upper_bound(d,n)=(n-d)^log(d-1,2)
max_n = 45
max_d = 7
Tilde = {}

for d in range(3,max_d+1):
    for n in range(3,max_n+1):
        if d==3:
            Tilde[d,n]=n-d
        if d>=4:
            if n==d:
                Tilde[d,n] = 0
            if n>=d and n<2*d:
                Tilde[d,n] = Tilde[d-1,n-1]
            if n>=2*d:
                Tilde[d,n] = Tilde[d-1,n-1] + 2*Tilde[d,floor(n/2)] + 2

for d in range(4,max_d+1):
    for n in range(4,max_n+1):
        if d==4:
            if Tilde[d,n] <= upper_bound(d,n):
                print "Passed " + str(d) + ", " + str(n)
            else:
                print "Failed " + str(d) + ", " + str(n)
        if d>=5:
            if (n>=2*d and n<d+8):
                if Tilde[d,n] <= upper_bound(d,n):
                    print "Passed " + str(d) + ", " + str(n)
                else:
                    print "Failed " + str(d) + ", " + str(n)
\end{verbatim}

\subsection{Sporadic cases for Theorem~\ref{theorem:DeltaB-k0}}\label{appendix:DeltaB-k0}

The following {\tt SAGE} code verifies that $\tilde{\Delta}_b(d,n) \leq (\frac23(n-d+\frac32))^{\log(d-1)}$ for $d \geq 4$ or $2d \leq n < d+11$.

\begin{verbatim}
upper_bound(d,n)=((2/3)*(n-d+3/2))^log(d-1,2)
max_n = 20
max_d = 10
Tilde = {}

for d in range(3,max_d+1):
    for n in range(3,max_n+1):
        if d==3:
            Tilde[d,n]=floor(2*n/3)-1
        if d>=4:
            if n>=d and n<2*d:
                Tilde[d,n] = Tilde[d-1,n-1]
            if n>=2*d:
                Tilde[d,n] = Tilde[d-1,n-1] + 2*Tilde[d,floor(n/2)] + 2

for d in range(4,max_d+1):
    for n in range(8,max_n+1):
        if (n>=2*d and n<d+11):
            if Tilde[d,n] <= upper_bound(d,n):
                print "Passed " + str(d) + ", " + str(n)
            else:
                print "Failed " + str(d) + ", " + str(n)
\end{verbatim}

\subsection{Sporadic cases for Theorem~\ref{theorem:Sigma}}\label{appendix:Sigma}

The following {\tt SAGE} code verifies that $\tilde\Sigma(d,n) \leq (n-d)^{\log (d)}$ for $d = 4 < n \leq 15$ or $2d \leq n < d+8$.

\begin{verbatim}
upper_bound(d,n)=(n-d)^log(d,2)
max_n = 15
max_d = 7
Tilde = {}

for d in range(2,max_d+1):
    for n in range(2,max_n+1):
        if d==2:
            Tilde[d,n]=floor(n/2)
        if d>=3:
            if n>=d and n<2*d:
                Tilde[d,n] = Tilde[d-1,n-1]
            if n>=2*d:
                Tilde[d,n] = Tilde[d-1,n-1] + 2*Tilde[d,floor(n/2)] + 2

for d in range(4,max_d+1):
    for n in range(5,max_n+1):
        if d==4:
            if Tilde[d,n] <= upper_bound(d,n):
                print "Passed " + str(d) + ", " + str(n)
            else:
                print "Failed " + str(d) + ", " + str(n)
        if d>=5:
            if (n>=2*d and n<d+8):
                if Tilde[d,n] <= upper_bound(d,n):
                    print "Passed " + str(d) + ", " + str(n)
                else:
                    print "Failed " + str(d) + ", " + str(n)
\end{verbatim}

\end{document}